\documentclass[a4paper,final]{article}

\usepackage{amsthm}
\newtheorem{teo}{Theorem}
\newtheorem{prop}[teo]{Proposition}
\newtheorem{lem}[teo]{Lemma}
\newtheorem{cor}[teo]{Corollary}
\newtheorem{question}[teo]{Question}

\theoremstyle{definition}
\newtheorem{rem}[teo]{Remark}

\usepackage{amssymb}
\usepackage{amsmath}

\usepackage[dvips]{graphicx}
\usepackage{psfrag}

\setcounter{topnumber}{10}

\newcommand{\calC}{{\cal C}}
\newcommand{\calT}{{\cal T}}
\newcommand{\calF}{{\cal F}}
\newcommand{\calR}{{\cal R}}
\newcommand{\calP}{{\cal P}}

\renewcommand{\int}{{\rm int}}

\newcommand{\vigone}{\mbox{${\rm Vig}_1$}}
\newcommand{\vigtwo}{\mbox{${\rm Vig}_2$}}
\newcommand{\vigthree}{\mbox{${\rm Vig}_3$}}
\newcommand{\vigi}{\mbox{${\rm Vig}_*$}}

\psfrag{V1}{\small $\vigone$}
\psfrag{V2}{\small $\vigtwo$}
\psfrag{V3}{\small $\vigthree$}

\newcommand{\tone}{\mbox{${\rm T}_1$}}
\newcommand{\ttwo}{\mbox{${\rm T}_2$}}
\newcommand{\tthree}{\mbox{${\rm T}_3$}}
\newcommand{\tfour}{\mbox{${\rm T}_4$}}
\newcommand{\ti}{\mbox{${\rm T}_*$}}
\newcommand{\bubble}{\mbox{${\rm B}$}}
\newcommand{\saddle}{\mbox{${\rm S}$}}
\psfrag{T1}{\small $\tone$}
\psfrag{T2}{\small $\ttwo$}
\psfrag{T3}{\small $\tthree$}
\psfrag{T4}{\small $\tfour$}
\psfrag{B}{\small $\bubble$}
\psfrag{S}{\small $\saddle$}

\newcommand{\wall}{\mbox{${\rm W}$}}
\psfrag{W}{\small $\wall$}

\newcommand{\onetotwo}{\mbox{$1{\rm to}2$}}
\newcommand{\zerototwo}{\mbox{$0{\rm to}2$}}
\psfrag{1to2}{\small $\onetotwo$}
\psfrag{0to2}{\small $\zerototwo$}

\newcommand{\field}{\mathbb F}

\newcommand{\twelvejsym}[4]{
\small{
\left|
\begin{array}{@{}c@{\ \,}c@{\ \,}c@{}}
#1 \\
#2 \\
#3 \\
#4
\end{array}
\right|
}
}

\newcommand{\ptwoirred}{$\mathbb{P}^2$--irreducible}
\newcommand{\RPthree}{\mathbb{RP}^3}

\newcommand{\doubleidx}[2]{\tiny{\begin{array}{c} #1 \\ #2 \end{array}}}

\psfrag{SPP}{\small ${\rm SPP}$}
\psfrag{p}{\small $p$}
\psfrag{pp}{\small $p'$}
\psfrag{D}{\small $D$}
\psfrag{Dp}{\small $D'$}
\psfrag{R}{\small $R$}
\psfrag{gamma}{\small $\gamma$}

\newcommand{\co}{\colon\thinspace}

\title{Invariants of Closed 3--Manifolds\\via Nullhomotopic Filling Dehn Spheres}

\author{Gennaro {\sc Amendola}}

\begin{document}

\maketitle

\begin{abstract}
  We provide a calculus for the presentation of closed 3--manifolds via
  nullhomotopic filling Dehn spheres and we use it to define an
  invariant of closed 3--manifolds by applying the state-sum machinery.
  As a potential application of this invariant, we show how to get lower bounds
  for the Matveev complexity of \ptwoirred\ closed 3--manifolds.
  We also describe an easy algorithm for constructing a nullhomotopic filling Dehn
  sphere of each closed 3--manifold from any of its one-vertex triangulations.
\end{abstract}

\vspace{1pt}

\begin{center}
\begin{small}
{\bf Keywords}\\[4pt]
3--manifold, immersed surface, invariant, state sum, complexity.

\vspace{.5cm}

{\bf MSC (2000)}\\[4pt]
57M27 (primary), 57R42 (secondary).
\end{small}
\end{center}

\section*{Introduction}

A {\em presentation} of a class of topological objects
(in our case {\em closed $3$--manifolds}) is a class of
combinatorial objects (in our case {\em nullhomotopic filling Dehn spheres}),
such that each combinatorial object defines
(say ``presents'') a unique topological object and each topological
object is presented by at least one combinatorial object.
A ({\em finite}) {\em calculus} for a presentation is a (finite) set of
moves on the combinatorial objects, such that two combinatorial
objects present the same topological object if and only if they are
related to each other by a finite sequence of moves in the given set.

Presentations and calculuses are fundamental tools for studying
3--manifolds and for constructing invariants, in fact
they translate a topological problem into a combinatorial and perhaps
simpler one.
For instance, an invariant on the class of topological objects can be
defined on the class of combinatorial objects, checking that it is preserved by
the moves of the calculus.

For closed 3--manifolds, there are several different types of presentations,
{\em e.g.}~triangulations, Heegaard diagrams, surgery (on
links) and spines.
In the present work we concentrate on {\em nullhomotopic filling Dehn spheres}, which
dually can be thought as a particular class of {\em cubulations} (see, for instance, Aitchison and Matsumotoi and Rubinstein~\cite{Aitchison-Matsumotoi-Rubinstein}, Funar~\cite{Funar}, Babson and Chan~\cite{Babson-Chan}).
The fact that nullhomotopic filling Dehn spheres present closed 3--manifolds
is already known (see, for instance, Montesinos-Amilibia~\cite{Montesinos} and Vigara~\cite{Vigara:present}).
We will provide here a proof of this result by using a very simple and efficient construction.
Such a construction is already known and studied (see, for instance, Shtan$'$ko and Shtogrin~\cite{Shtanko-Shtogrin}, Dolbilin and Shtan$'$ko and Shtogrin~\cite{Dolbilin-Shtanko-Shtogrin} and Funar~\cite{Funar}), but we have not found any application to nullhomotopic filling Dehn spheres in literature.

We will also provide a finite calculus for this presentation, deducing it
from another one, described by Vigara~\cite{Vigara:calculus}, which has been derived from the more general Homma--Nagase calculus~\cite{Homma-NagaseI, Homma-NagaseII} (see also Hass and Hughes~\cite{Hass-Hughes} and Roseman~\cite{Roseman}).
The main feature of our calculus consists in being local ({\em i.e.}~in order to apply a move, it is enough to look only at the portion of the nullhomotopic filling Dehn sphere involved in the move).
On the contrary, Vigara's calculus is very interesting and natural, but it has the drawback of not being local; hence, it is not useful for applying the state-sum machinery to define an invariant analogous to the Turaev--Viro one~\cite{Turaev-Viro}.
More precisely, Turaev and Viro used the Matveev--Piergallini calculus for
spines~\cite{Matveev:calculus, Piergallini:calculus} to define an invariant
for closed 3--manifolds as follows.
They defined a state sum for each spine ({\em i.e.}~a polynomial whose summands correspond to
different ``colourings'' of the spine) and they proved that, if its variables satisfy some equations ({\em e.g.}~the so-called Biedenharn--Elliott equations~\cite{Biedenharn-Louck, Turaev-Viro, Turaev}), the state sum is an invariant of the closed 3--manifold presented by the spine.
The equations come from the moves of the Matveev--Piergallini calculus and describe how the polynomial changes when the moves are applied.

We will use a framework analogous to Turaev and Viro's one.
Namely, we will first define the state sum for a nullhomotopic filling Dehn sphere.
Afterwards, we will study how it changes when a move of our calculus is applied, and we will prove that the difference between the state sums of two nullhomotopic filling Dehn spheres of the same closed 3--manifold is an element of a particular ideal of the polynomial ring.
(It is at this point that we will use the fact that our calculus is local, because in such a case the alteration due to the moves can be understood and computed explicitly.)
Finally, we will get an invariant by taking the coset (with respect to the ideal) represented by the state sum.
Some other similar invariants will be also outlined.

As a potential application of this invariant, we will eventually show how to get lower bounds for the {\em Matveev complexity}~\cite{Matveev:compl} of \ptwoirred\ closed 3--manifolds in terms of the invariant.
The Matveev complexity is usually difficult to compute.
Only upper bounds are easy to find (and, typically, they are very precise), while lower bounds are much more difficult to achieve.

\section{Nullhomotopic filling Dehn spheres}

Throughout this paper, all 3--manifolds are assumed to be connected.
We will mainly deal with (connected) closed 3--manifolds;
so $M$ will always denote such a closed 3--manifold.
Using the {\em Hauptvermutung}, we will freely intermingle the differentiable,
piecewise linear and topological viewpoints.

\paragraph{Dehn surfaces}
A subset $\Sigma$ of $M$ is said to be a {\em Dehn surface of $M$}~\cite{Papa}
if there exists an abstract closed surface $S$ and a transverse immersion
$f\co S\to M$ such that $\Sigma = f(S)$.

Let us fix for a while $f\co S\to M$ a transverse immersion (hence,
$\Sigma = f(S)$ is a Dehn surface of $M$). By transversality, the number
of pre-images of a point of $\Sigma$ is 1, 2 or 3; so there are three types of
points in $\Sigma$, depending on this number; they are called
{\em simple}, {\em double} or {\em triple}, respectively.
Note that the definition of the type of a point does not depend on the particular
transverse immersion $f\co S\to M$ we have chosen. In fact, the
type of a point can be also defined by looking at a regular neighbourhood (in
$M$) of the point, as shown in Fig.~\ref{fig:neigh_Dehn_surf}.
The set of triple points is denoted by $T(\Sigma)$; non-simple points are called {\em singular} and their set is denoted by $S(\Sigma)$.
\begin{figure}[ht!]
  \centerline{
  \begin{tabular}{ccc}
    \begin{minipage}[c]{3.5cm}{\small{\begin{center}
        \includegraphics{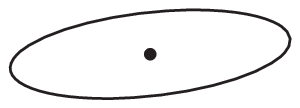}
      \end{center}}}\end{minipage} &
    \begin{minipage}[c]{3.5cm}{\small{\begin{center}
        \includegraphics{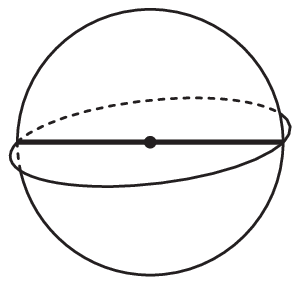}
      \end{center}}}\end{minipage} &
    \begin{minipage}[c]{3.5cm}{\small{\begin{center}
        \includegraphics{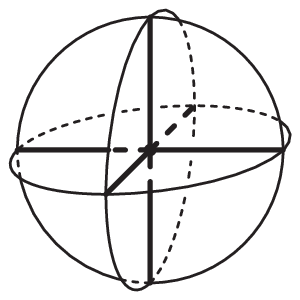}
      \end{center}}}\end{minipage} \\
    \begin{minipage}[t]{3.5cm}{\small{\begin{center}
        Simple\\point
      \end{center}}}\end{minipage} &
    \begin{minipage}[t]{3.5cm}{\small{\begin{center}
        Double\\point
      \end{center}}}\end{minipage} &
    \begin{minipage}[t]{3.5cm}{\small{\begin{center}
        Triple\\point
      \end{center}}}\end{minipage}
  \end{tabular}}
  \caption{Neighbourhoods of points (marked by thick dots) of a Dehn surface.}
  \label{fig:neigh_Dehn_surf}
\end{figure}
From now on, in all figures, triple points are always marked by thick dots and the singular set is also drawn thick.

\begin{rem}
\label{rem:euler_char}
  The topological type of the abstract surface $S$ is determined unambiguously by
  $\Sigma$.
\end{rem}

\paragraph{Filling Dehn surfaces and cubulations}
A Dehn surface $\Sigma$ of $M$ is called {\em filling}~\cite{Montesinos} if
its singularities induce a cell-decomposition of $M$; more precisely,
\begin{itemize}
\item $T(\Sigma) \neq \emptyset$,
\item $S(\Sigma) \setminus T(\Sigma)$ is made up of intervals (called {\em
    edges}),
\item $\Sigma \setminus S(\Sigma)$ is made up of discs (called {\em regions}),
\item $M \setminus \Sigma$ is made up of balls.
\end{itemize}
A {\em cubulation} of $M$ is a cell-decomposition of $M$ such that
\begin{itemize}
\item each 2--cell (called {\em face}) is glued along 4 edges,
\item each 3--cell (called {\em cube}) is glued along 6 faces arranged like the
  boundary of a cube.
\end{itemize}
Note that self-adjacencies and multiple adjacencies are allowed.
In Fig.~\ref{fig:cubul_example} we have shown a cubulation of the 3--dimensional torus
$S^1\times S^1\times S^1$ with two cubes (the identification of each pair of
faces is the obvious one, {\em i.e.}~the one without twists).
\begin{figure}[ht!]
  \centerline{\includegraphics{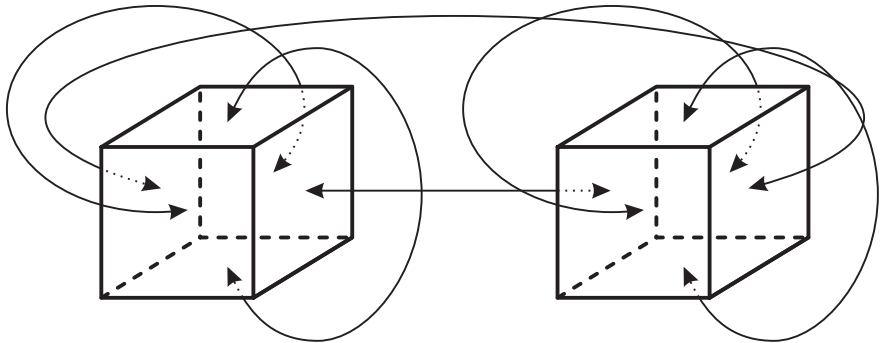}}
  \caption{A cubulation of the 3--dimensional torus $S^1\times S^1\times S^1$
    with two cubes (the identification of each pair of faces is the obvious
    one, {\em i.e.}~the one without twists).}
  \label{fig:cubul_example}
\end{figure}

The following construction is well-known (see~\cite{Aitchison-Matsumotoi-Rubinstein, Funar, Babson-Chan}, for instance).
Let $\calC$ be a cubulation of a closed 3--manifold; consider, for each cube of
$\calC$, the three squares shown in Fig.~\ref{fig:cube_to_surf};
it is quite easy to prove that the subset of $M$ obtained by gluing together
all these squares is a filling Dehn surface $\Sigma$ of $M$.
\begin{figure}[ht!]
  \centerline{\includegraphics{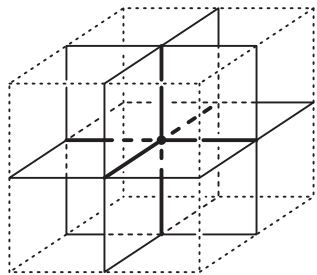}}
  \caption{Local behaviour of duality.}
  \label{fig:cube_to_surf}
\end{figure}
Conversely, a cell-decomposition $\calC$ can be constructed from a filling
Dehn surface $\Sigma$ of $M$ by considering an abstract cube for each triple
point of $\Sigma$ and by gluing the cubes together along the faces (the identification of each pair of faces is chosen by following the four germs of regions adjacent
to the respective edge of $\Sigma$); it is quite easy to prove that the cell-decomposition $\calC$ just constructed is indeed a cubulation of $M$.
The cubulation and the filling Dehn surface constructed in such a way are said
to be {\em dual} to each other.

\paragraph{Nullhomotopic filling Dehn spheres}
Let $\Sigma$ be a Dehn surface such that $\Sigma = f(S)$, where $f\co S\to M$ is a transverse immersion. If $S$ is a sphere, we will call $\Sigma$ {\em
  Dehn sphere} (this definition makes sense by Remark~\ref{rem:euler_char}).
A Dehn sphere $\Sigma$ is said to be {\em nullhomotopic} if $f$ is homotopic to a
constant map (also this definition makes sense, because it does not depend on the particular $f$ chosen).

In what follows, we will only deal with nullhomotopic filling Dehn spheres.
They are enough to study closed 3--manifolds, since they present closed 3--manifolds.
In order to prove this fact, we will use the following result, obtaining a nullhomotopic filling Dehn sphere of $M$ from any of its one-vertex triangulations.
(For an introduction to one-vertex triangulations of closed 3--manifolds, we refer the reader to~\cite{Matveev:book}.)
We point out that there are explicit constructions of nullhomotopic filling Dehn spheres~\cite{Montesinos, Vigara:present}; however, this construction is very efficient and applies to all closed $3$--manifolds.

\begin{prop}
\label{prop:tria_to_surf}
  Suppose that a closed $3$--manifold $M$ has a one-vertex triangulation $\calT$ with $c$ tetrahedra.
  Then $M$ has a nullhomotopic filling Dehn sphere with $4c$ triple points.
\end{prop}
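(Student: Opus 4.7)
The plan is to build the filling Dehn sphere by pushing the link of the single vertex outward through each tetrahedron. Let $v$ be the unique vertex of $\calT$. Because every tetrahedron has all four of its vertices identified with $v$, the link of $v$ in $M$ is an embedded $2$--sphere $L$ made of $4c$ corner triangles, one near each (tetrahedron, vertex-of-tetrahedron) pair. Fix a global parameter $t \in (2/3, 3/4)$. Inside each tetrahedron $\tau$ with vertices $v_1, \dots, v_4$, replace the corner triangle near $v_i$ with the larger triangle $\{x \in \tau : x_i = 1-t\}$, where $x_i$ is the $i$--th barycentric coordinate. Since $t$ is global and $\calT$ is one-vertex, the boundary segments of the four pushed triangles on each $2$--face of $\calT$ depend only on $t$ and hence match across every face identification; therefore the pushed triangles glue along their boundaries in exactly the same combinatorial pattern as the triangles of $L$. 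This defines a transverse immersion $f \co S^2 \to M$ whose image $\Sigma$ is a Dehn sphere.

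Next I would verify the triple point count and the filling property. Inside each tetrahedron, the four cut planes $x_i = 1 - t$ are in general position whenever $t \neq 3/4$: any three meet at a single point, and no four are concurrent. For $t \in (2/3, 3/4)$ all four such triple intersections lie in the interior of the tetrahedron, giving $\binom{4}{3} = 4$ triple points per tetrahedron and $4c$ in total. The six pairwise intersections produce six double arcs per tetrahedron, each subdivided by two of the interior triple points into three sub-arcs; the outer sub-arcs continue transversally across $2$--faces of $\calT$ into the adjacent tetrahedra and join up to form the singular set $S(\Sigma)$, whose complement in $\Sigma$ is a disjoint union of open polygonal discs (the regions). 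The complement $M \setminus \Sigma$ splits inside each tetrahedron into one central region, four vertex regions, six edge regions and four face regions; after gluing these across tetrahedron faces one obtains $c$ central balls, $c+1$ edge balls (one per edge of $\calT$), $2c$ face balls (one per $2$--face of $\calT$) and a single vertex ball around $v$, for a total of $4c + 2$ balls (consistent with the Euler-characteristic count $1 - E + 2c - c = 0$ giving $E = c+1$).

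Finally I would observe that $f$ is nullhomotopic: letting the pushing parameter $s$ vary continuously from $0$ to $t$ provides a homotopy from the inclusion $L \hookrightarrow M$ to $f$, and $L$ bounds the star of $v$, which is a $3$--ball in $M$. The main technical obstacle will be verifying the boundary matching and the regularity of the singular graph across $2$--faces of $\calT$; both rest on the one-vertex hypothesis, which permits a single global value of $t$, and on the choice $t \in (2/3, 3/4)$, which keeps every triple intersection in the interior of its tetrahedron and rules out the quadruple concurrence at $t = 3/4$.
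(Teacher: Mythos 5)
Your proposal follows the paper's own route: cut off the four corners of each tetrahedron, recognize the resulting Dehn surface as the image of the vertex link under an ``inflating'' homotopy, and use that homotopy together with the fact that the link bounds a ball to obtain nullhomotopy. You supply details the paper leaves to the reader (the barycentric parameterization with $t\in(2/3,3/4)$, the general-position check of the four cut planes, the count of complementary balls); one small slip is the phrase ``the star of $v$, which is a $3$--ball'' --- in a one-vertex triangulation the closed star of $v$ is all of $M$, and what you actually need is merely that the link bounds a small regular neighbourhood of $v$, which is a ball and suffices for the contraction.
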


\begin{proof}
  Consider, for each tetrahedron of $\calT$, the four triangles shown in
  Fig.~\ref{fig:tria_to_surf}.
  \begin{figure}[ht!]
    \centerline{\includegraphics{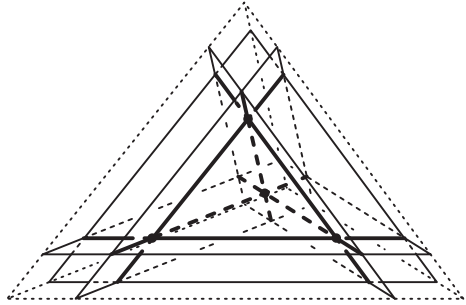}}
    \caption{Construction of a nullhomotopic filling Dehn sphere from a
      one-vertex triangulation of a closed 3--manifold.}
    \label{fig:tria_to_surf}
  \end{figure}
  The subset of $M$ obtained by gluing together all these triangles is a Dehn
  surface $\Sigma$ of $M$ with $4c$ triple points.
  The Dehn surface $\Sigma$ can be also constructed by starting with a small sphere
  whose centre is the only vertex of $\calT$ and then by inflating it.
  In order to draw a clear picture of what is happening, we have shown this construction
  in Fig.~\ref{fig:inflating} for the 2--dimensional torus; nevertheless, a genuine 3--dimensional
  picture could also be depicted for each tetrahedron of $M$, and we invite the reader to
  figure out it.
  \begin{figure}[ht!]
    \centerline{\includegraphics{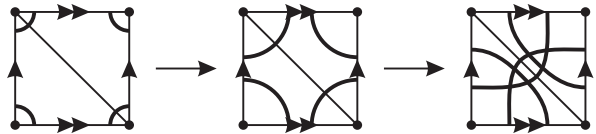}}
    \caption{Inflating of the small sphere, whose centre is the only vertex of
      $\calT$, to get $\Sigma$ (2--dimensional torus case).}
    \label{fig:inflating}
  \end{figure}
  (For the sake of completeness, we note that only closed surfaces with non-positive Euler
  characteristic have one-vertex triangulations; while all closed
  3--manifolds do.)
  Hence, we have proved that $\Sigma$ is a nullhomotopic Dehn sphere.
  Finally, it is very easy to prove that $\Sigma$ is filling, so we leave it to
  the reader.
\end{proof}

The construction described in the proof above is the dual counterpart of the well-known construction consisting in dividing a tetrahedron into four cubes~\cite{Shtanko-Shtogrin, Dolbilin-Shtanko-Shtogrin, Funar}.
However, we have not found in literature the proof that the result of the construction is the cubulation dual to a nullhomotopic filling Dehn sphere of the manifold.

We are now able to prove that nullhomotopic filling Dehn spheres present closed 3--manifolds.
\begin{teo}
\label{teo:spheres_present}
  \begin{itemize}
  \item Each closed $3$--manifold has a nullhomotopic filling Dehn sphere.
  \item If $\Sigma_1$ and $\Sigma_2$ are homeomorphic nullhomotopic filling
    Dehn spheres of closed $3$--manifolds $M_1$ and $M_2$ respectively, then $M_1$ and
    $M_2$ are also homeomorphic.
  \end{itemize}
\end{teo}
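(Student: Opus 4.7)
The plan is to handle the two bullet points separately, invoking Proposition~\ref{prop:tria_to_surf} for existence and the duality between filling Dehn spheres and cubulations for uniqueness.

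For the first assertion, I would recall that every closed $3$--manifold admits a one-vertex triangulation $\calT$: this is a classical fact (see~\cite{Matveev:book}), obtainable, for instance, by collapsing a maximal subtree of the $1$--skeleton of any triangulation. Applying Proposition~\ref{prop:tria_to_surf} to $\calT$ then produces a nullhomotopic filling Dehn sphere of $M$. The nullhomotopy is delivered for free by the construction, since $\Sigma$ is the inflation of a small sphere centred at the unique vertex of $\calT$, hence bounds a ball in $M$.

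For the second assertion, the nullhomotopy hypothesis plays no role; only the filling condition matters. The strategy is to show that a filling Dehn sphere $\Sigma$ of $M$ determines $M$ (up to homeomorphism) purely from its combinatorial data. Given a homeomorphism $\phi\co \Sigma_1 \to \Sigma_2$, I would first observe that the partition of a Dehn surface into simple, double and triple points is intrinsic: it can be read off from the local topology of $\Sigma$ alone, as explained just before Fig.~\ref{fig:neigh_Dehn_surf}. Consequently $\phi$ sends $T(\Sigma_1)$ onto $T(\Sigma_2)$ and $S(\Sigma_1)$ onto $S(\Sigma_2)$, and it induces a bijection between triple points, edges and regions which preserves all incidences.

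I would then invoke the duality recalled just before Proposition~\ref{prop:tria_to_surf}: each filling Dehn sphere $\Sigma_i$ gives rise to a cubulation $\calC_i$ of $M_i$ by taking one abstract cube per triple point and gluing faces across the edges of $\Sigma_i$, following the four germs of regions around each edge. The combinatorial isomorphism induced by $\phi$ between the cell structures of $\Sigma_1$ and $\Sigma_2$ transports one gluing recipe into the other, producing a combinatorial isomorphism $\calC_1 \cong \calC_2$ of cubulations. Since a cubulation determines its underlying manifold up to homeomorphism (its $3$--cells are standard cubes, glued according to the combinatorial data), it follows that $M_1$ and $M_2$ are homeomorphic. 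The step I expect to require the most care is the bookkeeping of local data in this duality: one must check that $\phi$ transports the cyclic order of the four region-germs around each edge of $\Sigma_1$ to that of the corresponding edge of $\Sigma_2$, so that the face-gluings in $\calC_1$ and $\calC_2$ really agree. This is a local verification near each edge and follows from $\phi$ being a homeomorphism of the stratified space $\Sigma_i$, but it is the point where the argument has to be stated precisely.
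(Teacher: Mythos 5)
Your proposal is correct and follows essentially the same route as the paper: existence via a one-vertex triangulation and Proposition~\ref{prop:tria_to_surf}, and uniqueness via the duality between filling Dehn spheres and cubulations. The paper's proof of the second bullet is terser (it simply asserts that homeomorphic $\Sigma_i$ give isomorphic dual cubulations), whereas you spell out the intermediate observations -- that the stratification into simple/double/triple points is intrinsic and that the local data around edges transport correctly -- which is exactly the content the paper leaves implicit.
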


\begin{proof}
  In order to prove the first point, it is enough to start from a one-vertex
  triangulation of a closed 3--manifold (all closed 3--manifolds have
  one-vertex triangulations, as shown in~\cite{Matveev:book}, for instance)
  and then to apply Proposition~\ref{prop:tria_to_surf}.

  We are left to prove the second point.
  Let $\calC_i$ be the cubulation of $M_i$ dual to $\Sigma_i$, for $i=1,2$.
  Since $\Sigma_1$ and $\Sigma_2$ are homeomorphic, the cubulations $\calC_1$ and
  $\calC_2$ are isomorphic and hence $M_1$ and $M_2$ are homeomorphic.
\end{proof}

\section{The calculus}

In order to define an invariant using nullhomotopic filling Dehn spheres and the state-sum machinery, it is
necessary a finite calculus ({\em i.e.}~a finite set of moves, such that two
nullhomotopic filling Dehn spheres present the same closed 3--manifold if and only if
they are related to each other by a finite sequence of moves in this set).

Throughout the paper we will draw pictures to describe various modifications
of Dehn spheres; in such pictures only the portions of the Dehn spheres
involved in the modifications are drawn, while the remaining portions of the Dehn
spheres are supposed fixed.

\subsection{Vigara's calculus}

In~\cite{Vigara:calculus} Vigara has described a finite calculus with three moves (the complete proof being in~\cite{Vigara:tesi}).
Let us describe Vigara's moves in detail.
In this section $\Sigma$ will always denote a nullhomotopic filling Dehn sphere of a
closed 3--manifold $M$.

\paragraph{$\mathbf{Vig_1}$--move}
The first move is shown in Fig.~\ref{fig:vig_1} and is called {\em
  \vigone--move} (in~\cite{Vigara:calculus} it is called {\em finger move~2}).
\begin{figure}[ht!]
  \centerline{\includegraphics{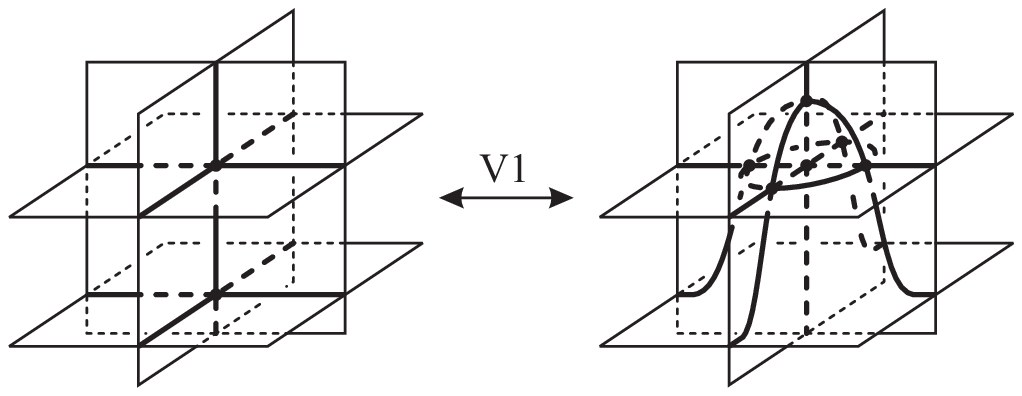}}
  \caption{\vigone--move.}
  \label{fig:vig_1}
\end{figure}
It will be called {\em positive} if it increases (by four) the number of
triple points of $\Sigma$, and {\em negative} otherwise.
Note that, if we apply a \vigone--move to $\Sigma$, the result will be another
nullhomotopic filling Dehn sphere of $M$.

\paragraph{$\mathbf{Vig_2}$--move}
The second move is shown in Fig.~\ref{fig:vig_2} and is called {\em
  \vigtwo--move} (in~\cite{Vigara:calculus} it is called {\em finger move~1}).
\begin{figure}[ht!]
  \psfrag{R1}{\small $R_1$}
  \psfrag{R2}{\small $R_2$}
  \psfrag{Ra1}{\small $R^1$}
  \psfrag{Ra2}{\small $R^2$}
  \psfrag{Ra3}{\small $R^3$}
  \psfrag{e1}{\small $e_1$}
  \psfrag{e2}{\small $e_2$}
  \centerline{\includegraphics{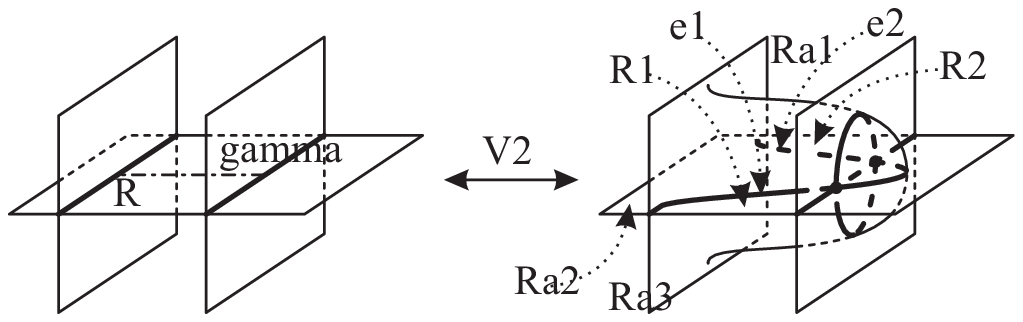}}
  \caption{\vigtwo--move.}
  \label{fig:vig_2}
\end{figure}
As above, we have {\em positive} and {\em negative} \vigtwo--moves, depending on
whether they increase or decrease (by two) the number of triple points of
$\Sigma$.
In contrast to the \vigone--move, this move is non-local, so it must be
described with some care.
A positive \vigtwo--move is determined by an arc $\gamma$ properly embedded in
a region $R$ of $\Sigma$.
The move acts on $\Sigma$ as in Fig.~\ref{fig:vig_2}, but, to define
its effect unambiguously, we must specify which pairs of regions
(out of the four ``vertical'' ones incident to $R$ at the endpoints of
$\gamma$) will become adjacent to each other after the move.
This is achieved by noting that $R$ is a disc, so its regular
neighbourhood in $M$ is a product and hence we can choose for $R$ a
transverse orientation.
Using it, at each endpoint of $\gamma$ we can tell from each other the
two ``vertical'' regions incident to $R$ as being an upper and a lower one,
and we can stipulate that the two upper regions will become incident after
the move (and similarly for the lower ones).

Obviously, a positive \vigtwo--move leads to a nullhomotopic filling Dehn sphere
of $M$.
For the negative case the situation is more complicated.
A negative \vigtwo--move may lead to a nullhomotopic Dehn sphere that is not
filling.
For instance, if $R_1$ and $R_2$ are contained in the same region, then after the negative
\vigtwo--move the ``region'' $R$ would not be a disc.
In order to avoid this loss of fillingness, we will call negative
\vigtwo--moves only those preserving fillingness.
So a negative \vigtwo--move is the inverse of a positive \vigtwo--move.
With this convention, if we apply a negative \vigtwo--move to $\Sigma$, the result will
be another nullhomotopic filling Dehn sphere of $M$.

\paragraph{$\mathbf{Vig_3}$--move}
The third move is shown in Fig.~\ref{fig:vig_3} and is called {\em
  \vigthree--move} (in~\cite{Vigara:calculus} it is called {\em saddle move}).
\begin{figure}[ht!]
  \psfrag{D}{\small $D$}
  \centerline{\includegraphics{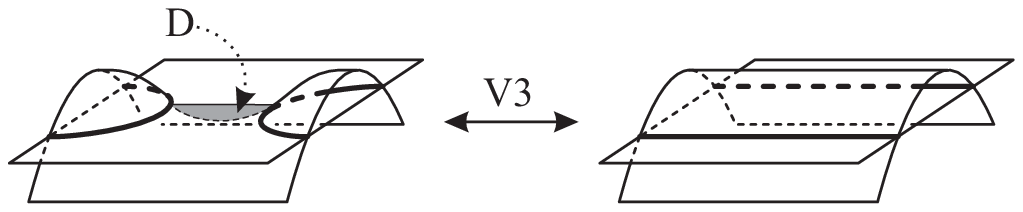}}
  \caption{\vigthree--move.}
  \label{fig:vig_3}
\end{figure}
In contrast to the other two moves, here we cannot distinguish between {\em
  positive} and {\em negative} \vigthree--moves. In fact this move is
symmetric.
As the \vigtwo--move, this move is non-local, so it must be described with some
care.
A \vigthree--move is determined by a disc (say $\Delta$) properly embedded in
a component (a ball) of $M\setminus\Sigma$, as shown in Fig.~\ref{fig:vig_3}.
The move acts on $\Sigma$ as in Fig.~\ref{fig:vig_3}, but, to define
its effect unambiguously, we must specify which pairs of regions will unite after the move.
This is achieved with the same technique as above, after noting that every
region of $\Sigma$ is a disc.

Now, we cannot conclude as done above, because a \vigthree--move defined in
such a way, when applied to $\Sigma$, leads to a nullhomotopic Dehn sphere,
which may not be filling.
In order to avoid this loss of fillingness, we will call \vigthree--moves only
those preserving fillingness.
With this convention, if we apply a \vigthree--move to $\Sigma$, the result
will be another nullhomotopic filling Dehn sphere of $M$.

\paragraph{Vigara's calculus}
We are now able to state the calculus, whose proof is outlined in~\cite{Vigara:calculus} and fully provided in~\cite{Vigara:tesi}.
\begin{teo}
[Vigara]
\label{teo:vigara}
  Let $\Sigma_1$ and $\Sigma_2$ be nullhomotopic filling Dehn spheres of closed
  $3$--manifolds $M_1$ and $M_2$, respectively.
  Then, $M_1$ and $M_2$ are homeomorphic if and only if $\Sigma_1$ and $\Sigma_2$ can be
  obtained from each other via a sequence of \vigone--, \vigtwo-- and \vigthree--moves.
\end{teo}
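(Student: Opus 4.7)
The \textbf{``if'' direction} is essentially built into the definitions of the moves. Each of the three moves is realized by a local homotopy of the underlying immersion $f\co S^2\to M$, so the resulting image is again the image of a map homotopic to $f$ (hence still nullhomotopic) and sits in the same $M$. Fillingness of the output is guaranteed because, for the \vigtwo-- and \vigthree--moves, we explicitly restricted attention to those applications that preserve fillingness, while a \vigone--move is local and is easily checked not to destroy any of the four conditions defining fillingness. Hence every \vigi--move sends a nullhomotopic filling Dehn sphere of $M$ to another one.

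For the \textbf{``only if'' direction} the plan is to invoke the Homma--Nagase calculus~\cite{Homma-NagaseI, Homma-NagaseII} (see also~\cite{Hass-Hughes, Roseman}) for generic homotopies of transverse surface immersions in a 3--manifold. Given $f_i\co S^2\to M$ with filling images $\Sigma_i$ ($i=1,2$) and $M_1\cong M_2\cong M$, both $f_1$ and $f_2$ are nullhomotopic and therefore freely homotopic to each other via a map $H\co S^2\times[0,1]\to M$. Putting $H$ in general position, it decomposes into finitely many elementary modifications of the image surface, each drawn from the short Homma--Nagase list. That list already contains the three Vigara moves; the remaining entries are auxiliary moves (``bubble'', ``$0{\rm to}2$'', and related events) that locally create or destroy spherical or bigon pieces. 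Since $S$ is forced to remain a sphere throughout (by Remark~\ref{rem:euler_char} applied to every intermediate generic stage), one only needs to express each surviving auxiliary move as a finite composition of \vigone--, \vigtwo--, \vigthree--moves through nullhomotopic \emph{filling} Dehn spheres.

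The \textbf{main obstacle} is precisely the last step: maintaining fillingness across the whole homotopy. A generic homotopy between $\Sigma_1$ and $\Sigma_2$ typically passes through Dehn spheres that fail one of the four fillingness conditions (for instance, just after a bubble birth the complement has an extra ball, or after certain finger moves a region ceases to be a disc), whereas our calculus is allowed to visit only filling Dehn spheres. The technical heart of the proof, as carried out in~\cite{Vigara:tesi}, is a normal-form/rewriting procedure: one inserts compensating pairs of \vigone-- and \vigtwo--moves to ``repair'' fillingness at each intermediate stage, and then shows that, under this normalization, each auxiliary Homma--Nagase move becomes expressible as a finite product of \vigi--moves. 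Concretely, this is a combinatorial analysis of how the singular set $S(\Sigma)$, its triple points $T(\Sigma)$, and the complementary balls evolve along the generic homotopy, and the case-check needed to cover all Homma--Nagase events is where the bulk of the work lies; this is exactly what is outlined in~\cite{Vigara:calculus} and fully worked out in~\cite{Vigara:tesi}.
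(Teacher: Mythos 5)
The paper does not prove Theorem~\ref{teo:vigara} at all: it is stated as Vigara's theorem, with the proof ``outlined in~\cite{Vigara:calculus} and fully provided in~\cite{Vigara:tesi}.'' Your sketch is a fair description of the route taken in those references --- the easy direction is a check that each \vigi--move preserves the class of nullhomotopic filling Dehn spheres inside a fixed $M$ (combined, implicitly, with the second item of Theorem~\ref{teo:spheres_present} to pass from ``same sequence of moves'' to ``homeomorphic manifolds''), and the hard direction puts a free homotopy between the two immersions in general position, decomposes it into Homma--Nagase elementary deformations, and then repairs fillingness at every intermediate stage so that the path stays inside filling Dehn spheres. You have, however, only named the repair step and deferred it entirely to Vigara's thesis; the normal-form/rewriting procedure that makes this work is precisely the content of the theorem and is not reproduced here, so what you have is an outline at the same level of detail as the paper itself rather than a self-contained proof. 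One minor inaccuracy: the fact that the abstract surface stays a sphere throughout the homotopy is automatic because the homotopy is a map $S^2\times[0,1]\to M$, not something one needs Remark~\ref{rem:euler_char} for (that remark is used only to make the term ``Dehn sphere'' well-defined for a given image).
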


\subsection{The local calculus}

The \vigtwo--~and \vigthree--moves of the calculus described above are not useful in
order to define the invariant, because these moves are non-local.
More precisely, if we want to apply one of these moves, we should look not
only at the portion of the Dehn sphere involved in the move but also at the
whole Dehn sphere, to check that fillingness is preserved.
To avoid this problem, we will provide another calculus in which all
moves are local, {\em i.e.}~such that the portion of a nullhomotopic
filling Dehn sphere involved in the move tells whether the move can be applied
or not.
Let us start with the description of the moves.
As above, also in this section $\Sigma$ will denote a nullhomotopic filling Dehn
sphere of a closed 3--manifold $M$.

\paragraph{$\mathbf{T_1}$--move}
The first move is the \vigone--move described above.
In order to uniform the notation, we call it {\em
  \tone--move} and we draw it in Fig.~\ref{fig:t_1}.
\begin{figure}[ht!]
  \centerline{\includegraphics{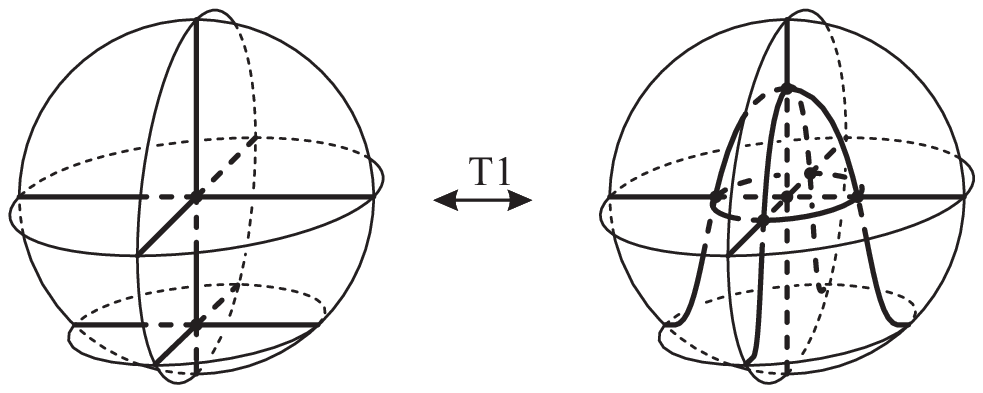}}
  \caption{\tone--move.}
  \label{fig:t_1}
\end{figure}
It will be called {\em positive} if it increases (by four) the number of
triple points of $\Sigma$, and {\em negative} otherwise.

\paragraph{$\mathbf{T_2}$--move}
The second move is shown in Fig.~\ref{fig:t_2} and is called {\em
  \ttwo--move}.
\begin{figure}[ht!]
  \centerline{\includegraphics{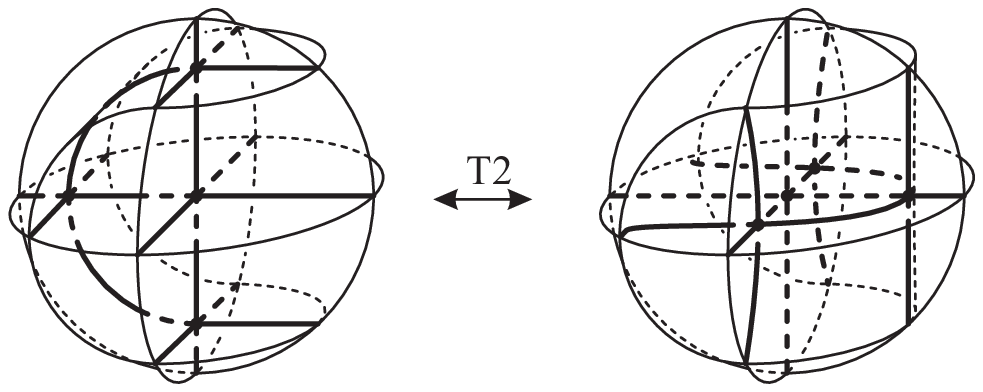}}
  \caption{\ttwo--move.}
  \label{fig:t_2}
\end{figure}
In contrast to the \tone--move, here we cannot distinguish between positive and
negative \ttwo--moves; in fact this move is symmetric.

\paragraph{$\mathbf{T_3}$--move}
The third move is shown in Fig.~\ref{fig:t_3} and is called {\em
  \tthree--move}.
\begin{figure}[ht!]
  \centerline{\includegraphics{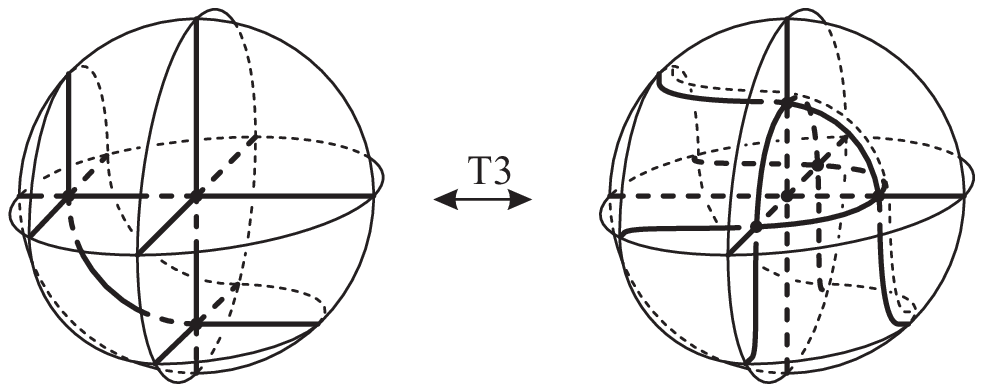}}
  \caption{\tthree--move.}
  \label{fig:t_3}
\end{figure}
It will be called {\em positive} if it increases (by two) the number of
triple points of $\Sigma$, and {\em negative} otherwise.

\paragraph{$\mathbf{T_4}$--move}
The fourth move is shown in Fig.~\ref{fig:t_4} and is called {\em
  \tfour--move}.
\begin{figure}[ht!]
  \centerline{\includegraphics{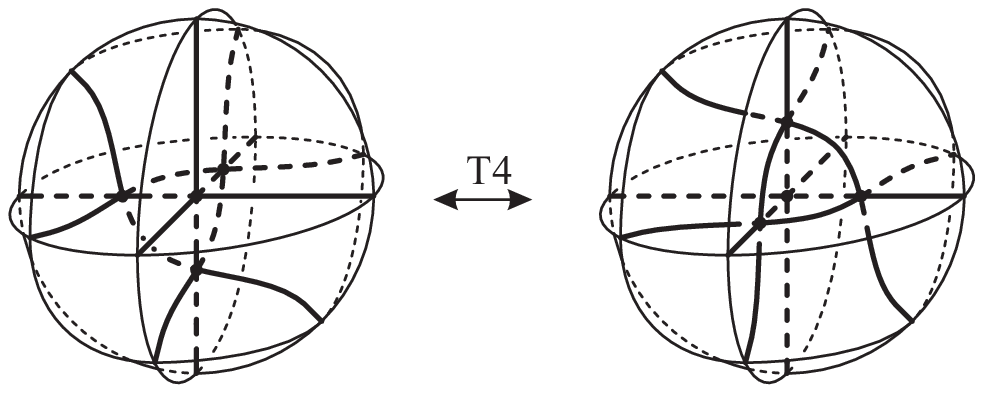}}
  \caption{\tfour--move.}
  \label{fig:t_4}
\end{figure}
As it occurs for the \ttwo--move, here we cannot distinguish between positive and
negative \tfour--moves; in fact also this move is symmetric.

\paragraph{$\mathbf{B}$--move}
The next move is slightly unnatural.
It is shown in Fig.~\ref{fig:b} and is called {\em \bubble--move}.
\begin{figure}[ht!]
  \psfrag{fD1}{\small $f(D_1)$}
  \psfrag{fD2}{\small $f(D_2)$}
  \centerline{\includegraphics{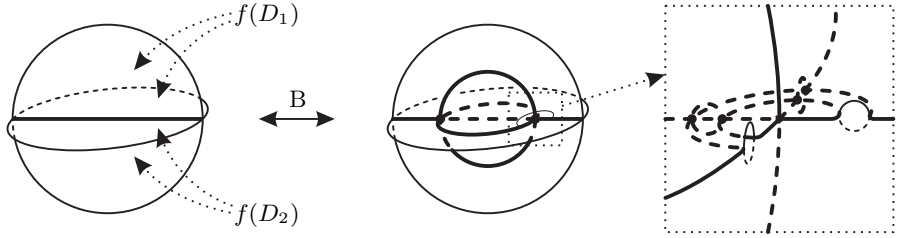}}
  \caption{\bubble--move and spiral piping.}
  \label{fig:b}
\end{figure}
It will be called {\em positive} if it increases (by six) the number of
triple points of $\Sigma$, and {\em negative} otherwise.
The configuration shown in Fig.~\ref{fig:b}-right will be called {\em spiral
  piping} and will be denoted in the figures as shown in Fig.~\ref{fig:b}-centre.

\paragraph{$\mathbf{S}$--move}
The last move is also slightly unnatural.
It is shown in Fig.~\ref{fig:s} and is called {\em \saddle--move}.
\begin{figure}[ht!]
  \centerline{\includegraphics{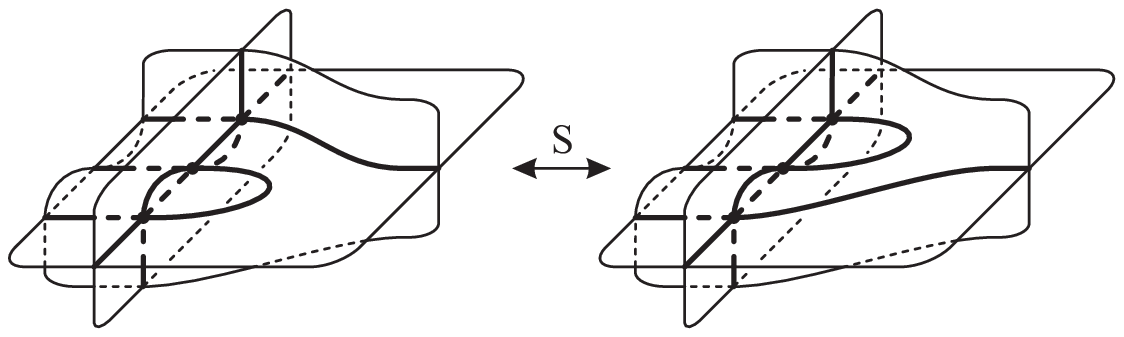}}
  \caption{\saddle--move.}
  \label{fig:s}
\end{figure}
It is a specialization of the \vigthree--move; in fact, it is a \vigthree--move
applied to a nullhomotopic filling Dehn sphere having a particular shape near
the portion involved in the \vigthree--move.
The \saddle--move is symmetric, so we cannot distinguish between positive and
negative \saddle--moves.

\paragraph{Properties of the moves}
The first property to point out is the following one.
\begin{prop}\label{prop:preserving_moves}
  If we apply a \ti--, a \bubble-- or an \saddle--move to a nullhomotopic filling
  Dehn sphere of a closed $3$--manifold $M$, the result
  will be another nullhomotopic filling Dehn sphere of $M$.
\end{prop}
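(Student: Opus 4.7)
The plan is to handle the six moves case by case. For $\tone$ (which is the same as the $\vigone$-move) the statement has already been recorded in Section 2.1, so it suffices to treat $\ttwo,\tthree,\tfour,\bubble,\saddle$. For each of these the strategy is the same.

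First, I would show that each move can be realized by a homotopy of the immersion $f\co S^2\to M$ supported inside a small ball $B\subset M$ whose interior contains the portion of $\Sigma$ modified by the move. Inspecting each figure, the preimage $f^{-1}(B)$ is a disjoint union of discs embedded in $S^2$; the before- and after-pictures differ only by a transverse immersion of these discs into $B$ with the same boundary map on $\partial B$; and any two such immersions are joined by a homotopy inside $B$ keeping $\partial B$ pointwise fixed. Three consequences follow at once: the abstract source surface remains $S^2$, the new immersion $f'$ is homotopic to $f$ (and hence to a constant), and the ambient manifold $M$ is unchanged.

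The remaining and substantive task is to verify that the new Dehn sphere $\Sigma'$ is filling. The conditions $T(\Sigma')\neq\emptyset$ and $S(\Sigma')\setminus T(\Sigma')$ being a union of intervals are clear from the pictures: outside $B$ nothing is touched, and inside $B$ the new local singular graph is again a regular graph containing at least one triple point. The delicate conditions are that every region of $\Sigma'$ is a disc and every component of $M\setminus\Sigma'$ is a ball. These could fail only if the move fused two distinct global regions, or two distinct global complementary balls, through $\partial B$. I would rule this out by checking case by case that the combinatorial adjacencies along the curves $\partial B\cap \Sigma$ and along the spheres bounding the components of $M\setminus\Sigma$ that meet $\partial B$ are preserved: for $\ttwo$, $\tthree$, $\tfour$ one reads off an explicit bijection between the regions (respectively balls) touching $\partial B$ before and after the move, plus the creation or destruction of some manifestly cellular pieces entirely contained in $B$. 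For the $\bubble$- and $\saddle$-moves, the specific shape of the local picture (the spiral piping in the former, the saddle configuration in the latter) is designed precisely so that the exchange of regions and balls takes place inside $B$ and does not propagate outside.

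The main obstacle is exactly this fillingness verification, which looks global in nature because regions and complementary balls may be arbitrarily large. The crucial observation that resolves it is that each move is a purely local surgery that replaces a controlled collection of local discs and balls by another such collection having the same boundary combinatorics on $\partial B$: since the regions and complementary balls of $\Sigma$ are cells and this boundary match-up is preserved, those of $\Sigma'$ are still cells and no two global regions or balls get merged. This is exactly what makes the present calculus local, in contrast to $\vigtwo$ and $\vigthree$, where the local picture alone does not guarantee that fillingness is preserved.
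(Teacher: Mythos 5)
There is a genuine gap in your argument, and it sits exactly where the paper's own proof concentrates its effort: the \emph{negative} $\bubble$--move. You assert that ``inside $B$ the new local singular graph is again a regular graph containing at least one triple point,'' and from this you conclude $T(\Sigma')\neq\emptyset$ locally. But the negative $\bubble$--move removes all six triple points contained in the ball $B$ and replaces the spiral piping by a plain crossing of two sheets (Fig.~\ref{fig:b}--left), which has \emph{no} triple points. So the local picture after a negative $\bubble$--move contains no triple point at all, and the condition $T(\Sigma')\neq\emptyset$ becomes a global statement about $\Sigma'$ outside $B$. Your purely local bookkeeping (``outside $B$ nothing is touched'') gives no information here: it is perfectly consistent with the local picture that $\Sigma$ had \emph{only} those six triple points, in which case $\Sigma'$ would have none and $S(\Sigma')$ would be a closed curve rather than a union of intervals, so $\Sigma'$ would fail to be filling on two counts.

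This is precisely the case the paper singles out as ``slightly more difficult.'' Its proof proceeds by contradiction: assuming $T(\Sigma')=\emptyset$, the preimage $f^{-1}(S(\Sigma'))$ is a single closed curve $\gamma$ splitting $S^2$ into two discs $D_1,D_2$; an Euler characteristic count shows $M\setminus\Sigma'$ has two components; but the holonomy of the double curve $f(\gamma)$ must exchange the germs in $f(D_1)$ and in $f(D_2)$, which forces $M\setminus\Sigma'$ to have three components --- a contradiction. Nothing in your ``local surgery preserving boundary combinatorics on $\partial B$'' framework can substitute for this, because the failure mode is not a fusion across $\partial B$ but the global disappearance of triple points. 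For the other moves ($\ti$, positive $\bubble$, $\saddle$) your local reasoning is in the same spirit as the paper's brief case-by-case check and is essentially fine, but the negative $\bubble$--move needs the separate global argument, and your proposal as written would pass over it with a false claim.
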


\begin{proof}
  The proof consists of a straightforward case-by-case check of the
  properties in the definition of nullhomotopic filling Dehn spheres.
  Only the case of negative \bubble--moves is slightly more difficult; therefore, we
  analyse it and we invite the reader to analyse the other cases.
  
  The result of the application of a negative \bubble--move to a nullhomotopic filling
  Dehn sphere of a closed $3$--manifold $M$ is a nullhomotopic Dehn sphere $\Sigma'$ such that $S(\Sigma')\neq\emptyset$, such that
  $\Sigma'\setminus S(\Sigma')$ is made up of discs and such that $M\setminus\Sigma'$ is made up of balls.
  If we have also $T(\Sigma')\neq\emptyset$, it is very easy to prove that $\Sigma'$ is filling (we leave this proof to the reader).
  
  Hence, in order to conclude, it is enough to prove that there are triple points.
  Suppose by contradiction that $T(\Sigma')=\emptyset$.
  Let $f\co S^2\to M$ be a transverse immersion such that $\Sigma' = f(S^2)$.
  Then, the pre-image of $S(\Sigma')$ is a closed curve $\gamma$ dividing $S^2$ into two discs (recall that $\Sigma'\setminus S(\Sigma')$ is made up of discs), say $D_1$ and $D_2$.
  They lie on opposite sides of $\gamma$, so (up to symmetry) their image near a double point of $\Sigma'$ appears as in Fig.~\ref{fig:b}-left.
  An easy Euler characteristic argument proves that $M\setminus\Sigma'$ is made up of two balls.
  Now, the holonomy of the curve of double points $f(\gamma)$ must interchange the two germs of discs contained in $f(D_1)$ and must interchange the two germs of discs contained in $f(D_2)$.
  This easily implies that $M\setminus\Sigma'$ is made up of three components, which is a contradiction.
\end{proof}

It is worth noting that all these moves are local; in fact, they can be applied each time a portion of $\Sigma$ appears like one of the sides of the figures representing the moves (we should not look at the whole of $\Sigma$).

We are now able to state the calculus.

\begin{teo}
\label{teo:calculus}
  Let $\Sigma_1$ and $\Sigma_2$ be nullhomotopic filling Dehn spheres of closed
  $3$--manifolds $M_1$ and $M_2$, respectively.
  Then, $M_1$ and $M_2$ are homeomorphic if and only if $\Sigma_1$ and $\Sigma_2$ can be
  obtained from each other via a sequence of \ti--, \bubble-- and \saddle--moves.
\end{teo}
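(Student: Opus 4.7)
The plan is to deduce the local calculus from Vigara's calculus (Theorem \ref{teo:vigara}). The ``if'' direction follows at once from Proposition \ref{prop:preserving_moves} together with the tautology $\tone = \vigone$. The substance of the proof is therefore the ``only if'' direction, and by Theorem \ref{teo:vigara} it suffices to show that a \vigtwo--move and a \vigthree--move between nullhomotopic filling Dehn spheres can each be realised by a finite sequence of \ti--, \bubble--, and \saddle--moves.

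For a positive \vigtwo--move determined by an arc $\gamma$ properly embedded in a region $R$ of $\Sigma$, I would argue by induction on a combinatorial complexity of $\gamma$ measured, say, by how many times a collar of $\gamma$ meets edges and triple points of $\Sigma$ in the two ``vertical'' regions incident to $R$ at the endpoints of $\gamma$. When this complexity is minimal, the two endpoints of $\gamma$ sit in a standard local model and the \vigtwo--move can be matched directly with a single \ttwo--, \tthree-- or \tfour--move (possibly after an auxiliary \tone--move). For the inductive step, one pushes $\gamma$ across a nearby edge or triple point by means of local \ti--moves, strictly decreasing the complexity; fillingness along the way is automatic by Proposition \ref{prop:preserving_moves}, which is precisely where the locality of our moves pays off compared to Vigara's non-local \vigtwo.

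For a \vigthree--move determined by a disc $\Delta$ properly embedded in a ball component of $M\setminus\Sigma$, I would cut $\Delta$ by a generic system of arcs into small subdiscs, each meeting $\Sigma$ in a controlled local pattern near at most one edge or triple point. The key point is then that each elementary piece can be realised by an \saddle--move, provided the surrounding portion of $\Sigma$ has first been brought into the shape exhibited in Fig.~\ref{fig:s}. This is precisely the role of the otherwise ``unnatural'' \bubble--move: a positive \bubble--move creates a spiral piping which supplies the particular local configuration required by the \saddle--move, and a subsequent negative \bubble--move removes it. In this way each elementary piece of $\Delta$ is realised by a bracketed sequence $\bubble\,\saddle\,\bubble^{-1}$, possibly preceded and followed by \ti--moves that adjust the local picture.

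The main obstacle is defining a monovariant for each simulation (a complexity of $\gamma$ for \vigtwo\ and of $\Delta$ for \vigthree) that is strictly decreased by the available local moves, while ensuring that every intermediate Dehn sphere remains both nullhomotopic and filling. Once such monovariants are in hand, the remainder is a finite case analysis over the local pictures of $\Sigma$ near a region, an edge and a triple point, and a bookkeeping verification that the composite effect of the chosen sequence of local moves coincides with the prescribed \vigtwo-- or \vigthree--move.
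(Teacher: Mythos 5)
Your high-level structure matches the paper: use Proposition~\ref{prop:preserving_moves} and $\tone=\vigone$ for the ``if'' direction, invoke Theorem~\ref{teo:vigara}, and then show that each \vigtwo-- and \vigthree--move decomposes into \ti--, \bubble-- and \saddle--moves. Your treatment of \vigtwo\ is in the same spirit as the paper's Lemma~\ref{lem:vigtwo}, although the paper argues by an explicit case analysis on the two new regions $R_1,R_2$ (whether their closures are discs, how many triple points they touch) rather than a numerical complexity induction, and in the most degenerate case it first inserts a \bubble--move to break symmetry; your sketch is looser but heads in the same direction.

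Your handling of \vigthree\ has a genuine gap. If you cut the disc $\Delta$ (properly embedded in a ball component $C$ of $M\setminus\Sigma$) by a system of arcs, the resulting subdiscs are \emph{not} properly embedded in components of $M\setminus\Sigma$: each cutting arc lies in the interior of $C$, not on $\Sigma$, so the subdiscs do not define \vigthree--moves and cannot be realised directly by \saddle--moves. You would first have to modify $\Sigma$ so that the cutting arcs lie on it, which is exactly the nontrivial content that your sketch defers. The paper instead introduces an auxiliary \emph{wall} move~$\wall$: a sphere bubble wrapped around a disc $D$ with $\partial D\subset\Sigma$, broken by one spiral piping (Fig.~\ref{fig:wall}). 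Lemma~\ref{lem:wall} --- the genuinely technical heart of the proof --- shows that each \wall--move is a composition of \ti-- and \bubble--moves, by creating a small wall near a triple point, sliding it through the ball $C'$ tracking a trivalent graph on $\partial C'$ (replacing the isotopy by $\onetotwo$-- and $\zerototwo$--moves, handling self-adjacencies of the closure of $C'$ with extra \bubble--moves and spiral piping passing moves). Given that lemma, \vigthree\ decomposes cleanly: two positive \wall--moves and two \ti--moves produce exactly the local configuration needed for a single \saddle--move, after which the walls are dismantled in reverse. Your ``$\bubble\,\saddle\,\bubble^{-1}$ plus \ti--adjustments'' intuition points at the right phenomenon, but without the wall machinery there is no mechanism to guarantee that the required local configuration for $\saddle$ is reachable while preserving fillingness, which is precisely the obstacle you name and set aside.
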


\subsection{Proof of the calculus}

This section is devoted to the proof of Theorem~\ref{teo:calculus}.

First of all, we note, by virtue of Proposition~\ref{prop:preserving_moves}, that the
result of applying a \ti--, a \bubble-- or an \saddle--move to a nullhomotopic
filling Dehn sphere of a closed 3--manifold is another nullhomotopic filling Dehn sphere
of the same manifold.
Hence, we are left to prove that such moves are enough to relate each pair of
nullhomotopic filling Dehn spheres of the same closed 3--manifold.
By Theorem~\ref{teo:vigara}, we have that each pair can be related by a
sequence of \vigi--moves, hence we need only to prove that each \vigi--move is
a composition of \ti--, \bubble-- and \saddle--moves.
For the \vigone--move there is nothing to prove, because each \vigone--move is
already a \tone--move.
With the following lemma we analyse the \vigtwo--move.

\begin{lem}
\label{lem:vigtwo}
  Each \vigtwo--move is a composition of \ti--~and \bubble--moves.
\end{lem}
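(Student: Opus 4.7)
The plan is to reduce a general (non-local) \vigtwo--move to a finite composition of local moves by subdividing the controlling arc $\gamma \subset R$ and identifying the resulting short \vigtwo--moves with \tthree--moves.

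First, I would identify the local version of a \vigtwo--move. Both a \vigtwo--move and a \tthree--move alter the number of triple points of $\Sigma$ by two, and a direct comparison of the figures should show that a \vigtwo--move whose arc $\gamma$ is short—contained in a standard neighbourhood of a single edge of $S(\Sigma)$ and meeting the singular set only at its two endpoints—is combinatorially identical to a \tthree--move, once the chosen transverse orientation of $R$ is matched with the local data of the \tthree--move. This establishes the base case: every sufficiently short \vigtwo--move is a \tthree--move.

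Second, for a general arc $\gamma$, I would cut it into short pieces $\gamma_1,\ldots,\gamma_n$ by inserting auxiliary singular structure along $\gamma$ using \tone--moves. Each \tone--move introduces four new triple points arranged as a local finger which intersects $\gamma$ transversely in two points; by performing $n$ such \tone--moves at well-chosen positions I can arrange that every piece $\gamma_i$ is short in the sense of the base case. I would then apply one local \vigtwo--move (that is, one \tthree--move) along each $\gamma_i$, and finally undo the auxiliary \tone--moves by their inverses. A region-by-region comparison of the initial and final configurations should show that the net effect is exactly the given \vigtwo--move.

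The main obstacle I expect is the bookkeeping of transverse orientations along $\gamma$. The effect of a \vigtwo--move depends on a choice of transverse orientation of $R$, which is used to pair up the ``upper'' and ``lower'' sheets at the two endpoints of $\gamma$. When the transverse framing along $\gamma$ is twisted relative to the local structure at the endpoints, a naive composition of local \tthree--moves will identify the wrong pairs of sheets and yield a different nullhomotopic filling Dehn sphere. This is precisely the discrepancy that a \bubble--move corrects: the spiral piping it introduces realises, locally, a half-twist of transverse framings. The concluding step is therefore a case analysis, following the pictures of the \ti--~and \bubble--moves, in which one inserts a \bubble--move (or its inverse) at the appropriate position whenever the framing along $\gamma$ requires a twist correction, and checks that the resulting composition of \tone--, \tthree--~and \bubble--moves reproduces the original \vigtwo--move on the nose.
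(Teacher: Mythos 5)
Your plan is genuinely different in structure from the paper's proof. The paper does not subdivide the controlling arc $\gamma$: instead it fixes one of the two new regions $R_1,R_2$ created by the positive \vigtwo--move in $\Sigma'$ and ``walks around'' its boundary with a specific sequence of local moves --- one positive \tone--move, one negative \tthree--move, and then a number of pairs $(\tone,\tone^{-1})$ depending on how many triple points are incident to that region. Your proposal instead inserts \tone--fingers transversally across $\gamma$ to cut it into short subarcs, performs one \tthree--move per subarc, and then removes the fingers. That is a different decomposition, and it has real gaps.

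First, the composition is not as clean as you suggest. After you do a short \vigtwo--move (your candidate \tthree--move) along $\gamma_i$, the local picture at the \tone--finger separating $\gamma_i$ from $\gamma_{i+1}$ has changed: one side of the finger has just been pierced by a new tube. A plain $\tone^{-1}$ no longer applies there, so ``finally undo the auxiliary \tone--moves by their inverses'' does not go through without further intermediate moves. The paper's sequence is chosen precisely so that the local pictures close up at each step; this is exactly the part that requires the figures and a careful check, and your sketch does not supply a replacement for it.

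Second, your identification of the role of the \bubble--move is off. There is no framing twist to correct: $R$ is a disc, so a transverse orientation on $R$ is a single global choice, and once made it determines the upper/lower pairing at every point of $\gamma$, including any intermediate subdivision points, consistently. In the paper, the \bubble--move is used for an entirely different reason: when \emph{both} $R_1$ and $R_2$ are incident to at most two triple points, the walk-around procedure cannot even begin, so one applies a positive \bubble--move near $\partial R_1$ to create extra triple points, runs the \ti--sequence, and removes the bubble with a negative \bubble--move at the end. Your plan would need a \bubble for the analogous degenerate reason (too little ambient structure to anchor the first local move), not to absorb a nonexistent half-twist.

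Third, the genuinely delicate case in the paper's proof --- when $R_1$ is incident to the same edge $e_1$ more than once, i.e.\ $R_1=R^i$ for some $i$ --- is handled by inserting an extra pair of auxiliary \vigtwo--moves whose new region $R'$ is a clean disc incident to three triple points, so that the lemma applies to them recursively. Your proposal does not touch this self-adjacency issue at all, and it is not subsumed by a framing correction. Until that case and the composition problem above are addressed, the argument has substantive gaps.
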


\begin{proof}
  Obviously, it is enough to prove the statement only for positive \vigtwo--moves.
  So let us consider a positive \vigtwo--move between two nullhomotopic filling Dehn
  spheres (say $\Sigma$ and $\Sigma'$); see Fig.~\ref{fig:vig_2}.
  Let us first suppose that the closure in $\Sigma'$ of one of the two regions
  $R_1$ and $R_2$ is a closed disc incident to at least three triple points; see Fig.~\ref{fig:vig_2_to_new_3dim} for
  an example.
  \begin{figure}[ht!]
    \psfrag{R1}{\small $R_1$}
    \centerline{\includegraphics{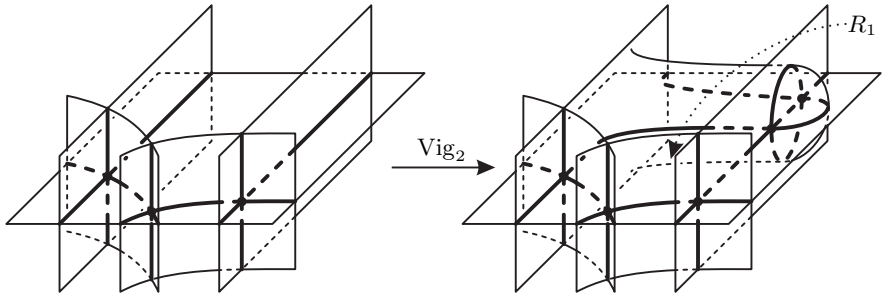}}
    \caption{A positive \vigtwo--move when the closure in $\Sigma'$ of $R_1$ is a closed
      disc incident to at least three triple points (the case of $R_2$ being symmetric).}
    \label{fig:vig_2_to_new_3dim}
  \end{figure}
  In such a case the sequence of \ti--moves shown in
  Fig.~\ref{fig:vig_2_to_new} is equivalent to the \vigtwo--move.
  For the sake of simplicity, we have shown only the singular set contained in the ``horizontal plane'' of
  Fig.~\ref{fig:vig_2_to_new_3dim}, but we invite the reader to figure out the
  3--dimensional picture.
  \begin{figure}[ht!]
    \psfrag{T1-1}{\small ${\rm T}_1^{-1}$}
    \psfrag{T3-1}{\small ${\rm T}_3^{-1}$}
    \centerline{\includegraphics{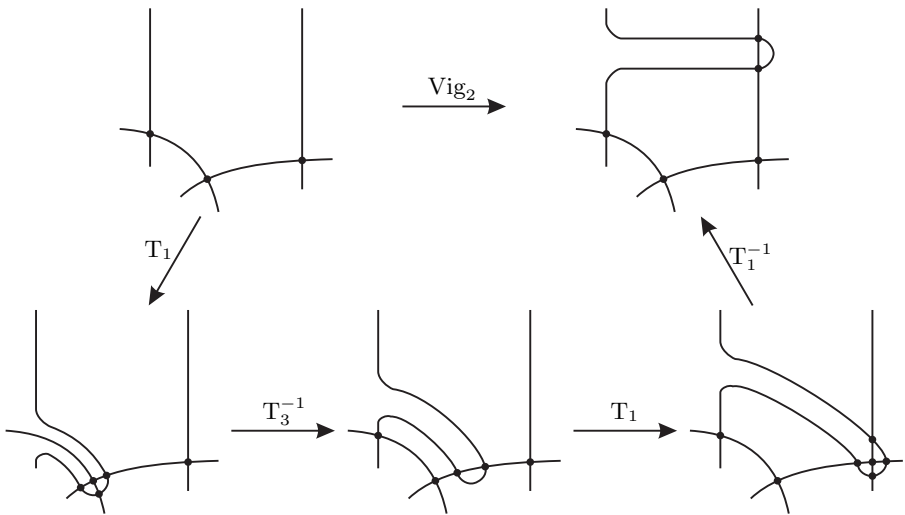}}
    \caption{The sequence of \ti--moves equivalent to the positive \vigtwo--move shown in
      Fig.~\ref{fig:vig_2_to_new_3dim}.}
    \label{fig:vig_2_to_new}
  \end{figure}
  We have shown the moves only for a particular case, but the moves are analogous in the general one; namely, they are a positive \tone--move, a negative \tthree--move, and then pairs of a positive and a negative \tone--move (depending on the number of triple points adjacent to the region).

  Suppose now that the closure of both $R_1$ and $R_2$ is not a disc, but at least one of them, say $R_1$ (the case of $R_2$ being symmetric), is incident to not less than three triple points.
  We can repeat
  the procedure above unless $R_1$ is incident to
  the edge $e_1$ more than once (see Fig.~\ref{fig:vig_2}).
  In such a case, however, we can repeat the procedure, but, when we need to pass
  along the edge $e_1$, we should add two \vigtwo--moves (a positive and a
  negative one).
  More precisely, $R_1$ may be equal to $R^i$ for $i=1,2,3$ (see Fig.~\ref{fig:vig_2}).
  We have shown in Fig.~\ref{fig:vig_2_to_new_self} the moves that are performed
  if $R_1=R^2$, the other two cases being analogous.
  \begin{figure}[ht!]
    \psfrag{Rp}{\small $R'$}
    \psfrag{T1T1}{\small $\tone + {\rm T}_1^{-1}$}
    \centerline{\includegraphics{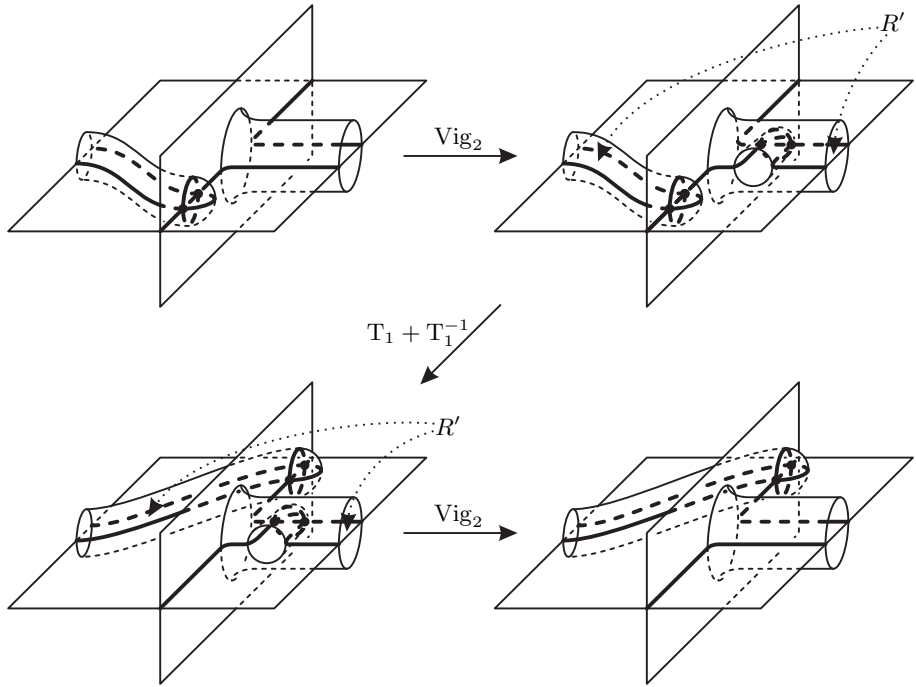}}
    \caption{If $R_1$ is incident more than once to the edge $e_1$, we must slightly modify the procedure.}
    \label{fig:vig_2_to_new_self}
  \end{figure}
  Note that the two \vigtwo--moves we have added are compositions of \ti--moves
  because the closure of the region $R'$ is a closed disc incident to three triple points (see Fig.~\ref{fig:vig_2_to_new_self}).
  
  Finally, suppose both $R_1$ and $R_2$ are incident to at most two triple points.
  In such a case, we can suitably apply a positive \bubble--move near the boundary of $R_1$ so that it comes to be incident to at least three triple points.
  Then, we can apply the \ti--moves described above and we can conclude by applying a negative \bubble--move.
  Therefore, we have proved that each \vigtwo--move is a composition of \ti--~and \bubble--moves.
\end{proof}

Before analysing the \vigthree--move, we introduce some notation and we prove
two technical lemmas.

\paragraph{Passing through spiral pipings}
The move shown in Fig.~\ref{fig:spiral_move} is called {\em spiral piping
  passing move}.
\begin{figure}[ht!]
  \centerline{\includegraphics{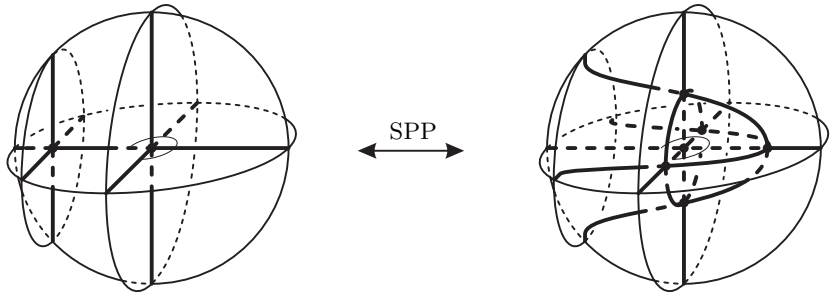}}
  \caption{Spiral piping passing move.}
  \label{fig:spiral_move}
\end{figure}
It will be called {\em positive} if it increases (by four) the number of
triple points of the Dehn sphere, and {\em negative} otherwise.
It is worth noting that this move is only a particular case of the 
{\em piping passing move} of~\cite{Vigara:calculus}.

\begin{lem}
\label{lem:spiral_move}
  Each spiral piping passing move is a composition of \ti--moves.
\end{lem}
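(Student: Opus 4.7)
The plan is to realize the spiral piping passing move by an explicit sequence of $\ti$--moves performed entirely inside a small $3$--ball containing the spiral piping together with the passing sheet. Since each $\ti$--move is local, and Proposition~\ref{prop:preserving_moves} guarantees that fillingness is preserved at every intermediate step, it is enough to describe such a sequence pictorially; the fact that the intermediate objects are genuine nullhomotopic filling Dehn spheres then comes for free.

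First, I would position the passing sheet just before it hits the outer layer of the spiral piping, and apply a positive $\tone$--move that pushes a small finger of the sheet into the tubular region. The finger now meets the outermost layer of the spiral in a little quadrilateral bounded by the four freshly created triple points. Next, I would transport the finger across the successive layers of the spiral one layer at a time: at each layer the local configuration matches, up to the obvious symmetries, one of the sides of Fig.~\ref{fig:t_3}, so a single $\tthree$--move (or its inverse) slides the finger past that layer without changing anything else. Once the finger has emerged on the far side of the piping, a negative $\tone$--move closes the construction.

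By design, the only triple points that change during this sequence are those involved in the finger's passage through the spiral, and a direct count matches the net gain of $+4$ triple points declared for the positive spiral piping passing move; the negative case is obtained by running the sequence in reverse.

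The main obstacle is purely combinatorial bookkeeping: as the finger is pushed through the spiral one must keep careful track of the transverse orientations of the moving sheet and of each layer of the piping, so that at every intermediate step the pairs of germs of regions that become adjacent after each elementary move agree with those prescribed by Fig.~\ref{fig:spiral_move}. As in the analogous situation of Lemma~\ref{lem:vigtwo}, the cleanest way to carry this out is to draw a sequence of planar cross--sections of the singular set in the spirit of Fig.~\ref{fig:vig_2_to_new}, one for each intermediate configuration, and to verify the claim by direct visual inspection.
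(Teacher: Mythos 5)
Your approach is essentially the one the paper takes: reduce to the positive case, exhibit an explicit chain of $\ti$--moves realising the spiral piping passing move, and verify the chain by drawing planar cross--sections of the singular set. The paper's proof is exactly this, displaying the chain in Fig.~\ref{fig:spiral_move_2dim} as a sequence of pictures of the singular set in the horizontal plane of Fig.~\ref{fig:spiral_move}.

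Where your version falls short is that the chain is only described verbally and the crucial pictures are deferred, and the verbal description contains two assertions that are not self-evident. First, the triple-point count: since the opening positive $\tone$--move ($+4$) and the closing negative $\tone$--move ($-4$) cancel, the intermediate $\tthree$--moves must by themselves supply the full net increment of $+4$; you state that ``a direct count matches,'' but you never exhibit how many $\tthree$--moves occur and what their signs are, so the count is not actually a count. Second, and more substantively, the statement that a negative $\tone$--move ``closes the construction'' once the finger has emerged on the far side of the piping is exactly the kind of claim that must be checked against the local picture: a negative $\tone$--move is only applicable when the neighbourhood has the precise intersection pattern of the right-hand side of Fig.~\ref{fig:t_1}, and it is not obvious from your description that the finger tip lands in such a configuration rather than in free space. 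Invoking Proposition~\ref{prop:preserving_moves} does not help here, since that proposition guarantees the \emph{result} of a legitimately applied $\ti$--move is again a nullhomotopic filling Dehn sphere, not that a given local picture admits the move. These are precisely the details the sequence of cross-sections in Fig.~\ref{fig:spiral_move_2dim} settles at a glance; until those pictures are actually produced, what you have is a plausible plan for a proof rather than the proof itself.
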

A more general result is proved in~\cite{Vigara:calculus}; however, we prove
it here for the sake of completeness.
\begin{proof}
  Obviously, it is enough to prove the statement only for positive spiral piping
  passing moves.
  The sequence of \ti--moves shown in
  Fig.~\ref{fig:spiral_move_2dim} is equivalent to the positive spiral piping passing move.
  \begin{figure}[ht!]
    \psfrag{T3-1}{\small ${\rm T}_3^{-1}$} 
    \centerline{\includegraphics{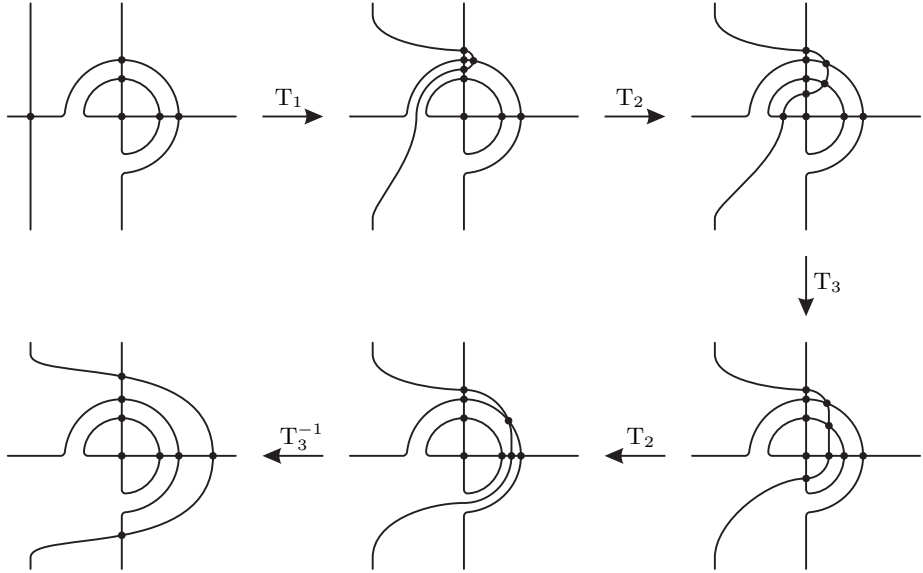}}
    \caption{The sequence of \ti--moves equivalent to the positive spiral piping passing move.}
    \label{fig:spiral_move_2dim}
  \end{figure}
  For the sake of simplicity, we have shown only the singular set contained in the ``horizontal plane'' of
  Fig.~\ref{fig:spiral_move}, but we invite the reader to figure out the
  3--dimensional picture.
\end{proof}

\paragraph{The wall}
In order to simplify the proof, we will need another move, which will turn
out to be a composition of \ti--~and \bubble--moves.

Let $\Sigma$ be a nullhomotopic filling Dehn sphere of a closed 3--manifold $M$.
Let $D$ be a closed disc embedded in $M$ such that
\begin{itemize}
\item $\partial D\subset\Sigma$,
\item $\int(D)\cap\Sigma = \emptyset$,
\item $\partial D\cap T(\Sigma) = \emptyset$,
\item $\partial D\cap S(\Sigma) \neq \emptyset$,
\item $\#(\partial D\cap S(\Sigma)) \geqslant 2$.
\end{itemize}
Let $B$ be a small regular neighbourhood of $D$ in $M$.
Obviously, $B$ is a ball whose boundary $\partial B$ is a sphere intersecting
some edges of $\Sigma$.
Let $\Sigma_{D,p}$ be the Dehn surface obtained from
$\Sigma\cup\partial B$ by replacing a small neighbourhood of a triple point $p$
(of $\Sigma\cup\partial B$) contained in $\partial B$ with a spiral piping as
shown in Fig.~\ref{fig:wall}.
\begin{figure}[ht!]
  \centerline{\includegraphics{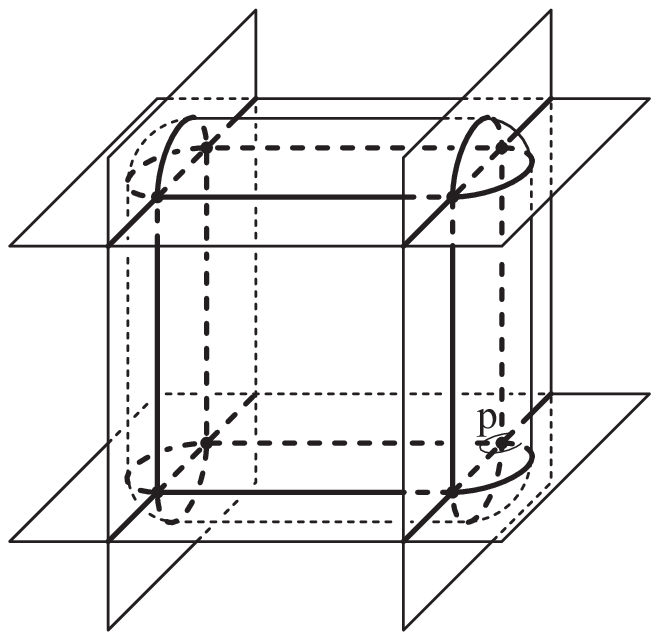}}
  \caption{A wall with $\#(\partial D\cap S(\Sigma))=4$.}
  \label{fig:wall}
\end{figure}
Note that the choice of the triple point $p$ is arbitrary, but the orientation
of the spiral piping is not.
Note also that we can think of $p$ both as a (double) point of $\Sigma$ and as a
(triple) point of $\Sigma_{D,p}$.
It is very easy to check that $\Sigma_{D,p}$ is a nullhomotopic filling Dehn
sphere of $M$.

The configuration shown in Fig.~\ref{fig:wall} is said {\em wall of
  $\Sigma_{D,p}$ with respect to the triple $(\Sigma,D,p)$}, and the move
between $\Sigma$ and $\Sigma_{D,p}$ is said {\em positive \wall--move}.
Note that this move is similar to the \bubble--move, but it is
non-local and actually there are infinitely many different \wall--moves depending on the number of singular points in $\partial D$.
A {\em negative \wall--move} is the inverse of a positive \wall--move.
Note that, in order to apply a negative \wall--move to a nullhomotopic
filling Dehn sphere of $M$, we should essentially check that the result
(which is anyway a nullhomotopic Dehn sphere of $M$) is filling.

We have the following lemma, whose (long and technical) proof will be postponed for a while.
\begin{lem}\label{lem:wall}
  Each \wall--move is a composition of \ti--~and \bubble--moves.
\end{lem}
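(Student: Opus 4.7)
The plan is to argue by induction on $n = \#(\partial D\cap S(\Sigma))$, which by hypothesis satisfies $n \geq 2$.

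For the base case $n = 2$, the boundary $\partial D$ meets $S(\Sigma)$ in exactly two points, so the sphere $\partial B$ intersects $\Sigma$ along a particularly simple configuration. The idea is to realize $\Sigma_{D,p}$ from $\Sigma$ directly. First I would perform a positive \bubble--move at a suitable point near $p$, producing a small sphere attached to $\Sigma$ via a spiral piping. Then I would apply a sequence of \ti--moves, interleaved with spiral piping passing moves (which are \ti--compositions by Lemma~\ref{lem:spiral_move}), to enlarge this small sphere step by step until its boundary coincides with $\partial B$ and the spiral piping sits at $p$. Each enlargement step pushes the sphere across one edge or one triple point of $\Sigma$; locally, each such push can be implemented by a \ttwo--, \tthree--, or \tfour--move.

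For the inductive step $n \geq 3$, assume the lemma for all walls with fewer than $n$ singular crossings, and consider $(\Sigma, D, p)$ with $n$ crossings. I would exhibit a disc $D^{*}$ with $\#(\partial D^{*}\cap S(\Sigma))=n-1$ and show that the passage from $\Sigma_{D,p}$ to $\Sigma_{D^{*},p}$ can be realized by \ti--~and \bubble--moves. Concretely, I would select a pair of consecutive singular crossings on $\partial D$ and \emph{shrink} $D$ across the intervening edge of $\Sigma$, modifying the wall by a \vigtwo--move; by Lemma~\ref{lem:vigtwo} this \vigtwo--move decomposes into \ti--~and \bubble--moves. The inductive hypothesis applied to $\Sigma_{D^{*},p}$ then completes the argument.

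The main obstacle is the base case: one must verify that the enlargement sequence respects the orientation of the spiral piping at $p$ and preserves fillingness at every intermediate stage. The case analysis is intricate because the local combinatorics depends on how $\partial D$ meets the two adjacent singular points of $\Sigma$, and because spiral piping passing moves must be correctly interleaved with the enlargement to deliver the spiral piping at the correct triple point. A secondary difficulty lies in the inductive step, namely in checking that the chosen \vigtwo--move is indeed applicable to $\Sigma_{D,p}$ and that its effect is precisely to replace $\Sigma_{D,p}$ by $\Sigma_{D^{*},p}$ without introducing auxiliary singularities.
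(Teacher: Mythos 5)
Your proposed induction on $n=\#(\partial D\cap S(\Sigma))$ is a genuinely different organization from the paper's argument, which instead creates a small wall $\Sigma_{D',p}$ near $p$ (via \ti--moves) and then carries it through the ball $C'$ to the position of $D$, realizing the isotopy of the auxiliary disc by local moves and handling the self-adjacencies of $\overline{C'}$ by a preparatory sequence of \bubble-- and \ti--moves. However, as written your proposal has two genuine gaps.

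First, the inductive step does not hold up as stated. Shrinking $\partial D$ across an edge of $S(\Sigma)$ removes a bigon, hence lowers the crossing count by two; pushing $\partial D$ through a triple point lowers it by one. Your text conflates the two: you claim to find $D^*$ with $\#(\partial D^*\cap S(\Sigma))=n-1$ by shrinking ``across the intervening edge,'' which is inconsistent. More importantly, whichever of the two elementary modifications is meant, the resulting change from $\Sigma_{D^*,p}$ to $\Sigma_{D,p}$ is not a single \vigtwo--move: a \vigtwo--move acts along an arc properly embedded in a region of $\Sigma$ and stays away from triple points, whereas the modification of the wall near a triple point of $\Sigma$ (or along a bigon) produces a different local configuration. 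It may well be a composition of \ti--~and \bubble--moves, but that is precisely the kind of delicate local verification the lemma requires, and invoking Lemma~\ref{lem:vigtwo} does not discharge it. One would also need to show that a reducing $D^*$ always exists when $n\geq 3$, which you do not address.

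Second, the base case absorbs essentially all of the difficulty without resolving it. Even for $n=2$, the disc $D$ may sit anywhere in $C$, far from $p$, and ``enlarging the small sphere step by step until its boundary coincides with $\partial B$'' is exactly the process the paper calls moving the small wall through $C'$. The paper proves this by substituting the isotopy of $D'$ by \onetotwo--~and \zerototwo--moves, decomposing \zerototwo--moves into \onetotwo--~and \vigtwo--moves, then replacing \onetotwo--moves with a \tone--~plus a \tthree--move; crucially, when $\overline{C'}$ is not an embedded closed ball (self-adjacencies along faces, edges or vertices), a preliminary sequence of \bubble--, spiral-piping-passing and \tfour--moves is required before the sphere can be moved at all. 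Your proposal acknowledges that the case analysis is ``intricate'' but offers no mechanism for these self-adjacencies, which is where the bulk of the argument lies. As it stands, the proposal identifies the right flavour of ingredients (\bubble--moves, spiral piping passing moves, \ti--moves) but does not give a proof.
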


\paragraph{Proof of the calculus}
After stating Lemma~\ref{lem:wall}, we conclude the proof of the calculus.

\begin{proof}[Proof of Theorem~\ref{teo:calculus}]
  We have already noted in the first part of this section that, by
  Proposition~\ref{prop:preserving_moves}, the result of applying a \ti--, a
  \bubble-- or an \saddle--move to a nullhomotopic filling Dehn sphere of a
  closed 3--manifold is another nullhomotopic filling Dehn sphere of the same manifold.
  Hence, we are left to prove that such moves are enough to relate each pair of
  nullhomotopic filling Dehn spheres of the same closed 3--manifold.
  By virtue of Theorem~\ref{teo:vigara}, we need only to prove that each \vigi--move is
  a composition of \ti--, \bubble-- and \saddle--moves.
  For the \vigone--move there is nothing to prove, because each \vigone--move is
  already a \tone--move.
  By using Lemma~\ref{lem:vigtwo}, we have that each \vigtwo--move is a composition of
  \ti--~and \bubble--moves.
  Hence, we are left to prove that each \vigthree--move is a composition of
  \ti--, \bubble-- and \saddle--moves.

  Let us consider a \vigthree--move between two nullhomotopic filling Dehn
  spheres (say $\Sigma$ and $\Sigma'$); see Fig.~\ref{fig:vig_3}.
  The idea is to modify the portion of $\Sigma$ involved in the move
  (shown in Fig.~\ref{fig:vig_3}-left) via some \ti--~and \wall--moves in order to apply
  an \saddle--move, and then to reconstruct the portion of $\Sigma'$ involved in
  the move (shown in Fig.~\ref{fig:vig_3}-right).
  So we start by applying two positive \wall--moves and two positive \ti--moves, as shown in
  Fig.~\ref{fig:pre_saddle}.
  \begin{figure}[ht!]
    \psfrag{T1T3}{\small $\tone + \tthree$}
    \centerline{\includegraphics[width=12cm]{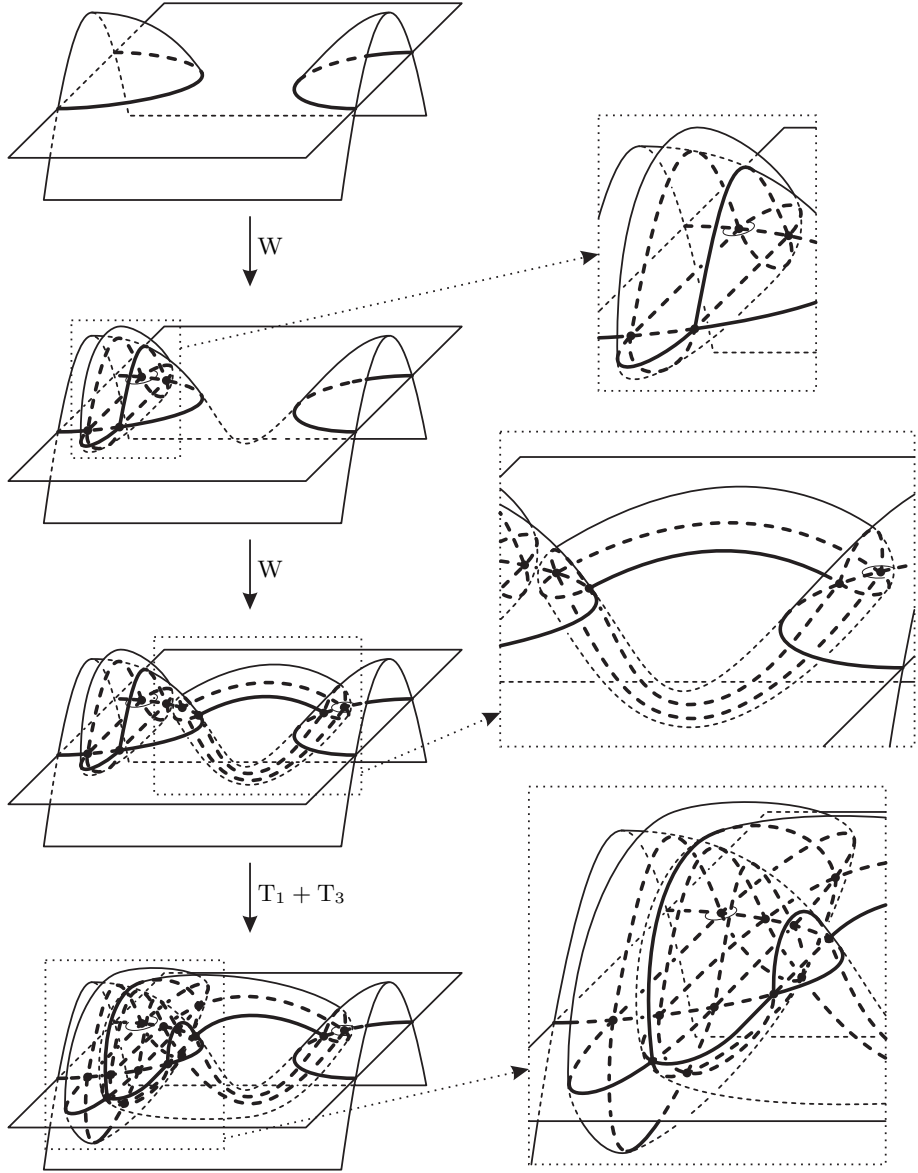}}
    \caption{\ti--, \wall-- and \saddle--moves equivalent to the \vigthree--move (first
      part).}
    \label{fig:pre_saddle}
  \end{figure}
  Then, we can apply an \saddle--move.
  Finally, we apply two negative \ti--moves and two negative \wall--moves, as shown in
  Fig.~\ref{fig:post_saddle}.
  \begin{figure}[ht!]
    \psfrag{T3T1}{\small ${\rm T}_3^{-1} + {\rm T}_1^{-1}$}
    \psfrag{invW}{\small $\wall^{-1}$}
    \centerline{\includegraphics[width=12cm]{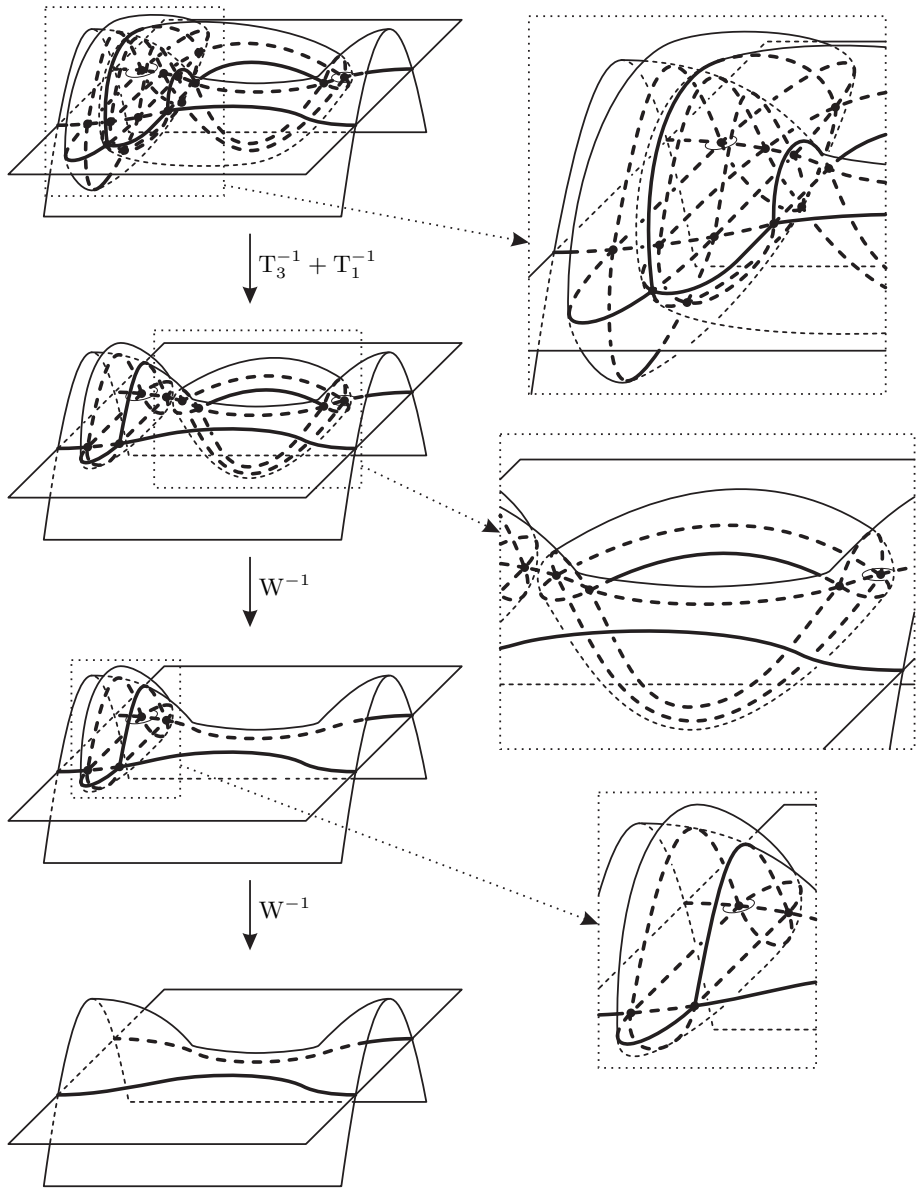}}
    \caption{\ti--, \wall-- and \saddle--moves equivalent to the \vigthree--move (second
      part).}
    \label{fig:post_saddle}
  \end{figure}
  (Note that indeed the negative \wall--moves preserve fillingness because the \vigthree--move does.)
  Now, we have proved that each \vigthree--move is a composition of \ti--, \wall-- and \saddle--moves; so, in order to conclude the proof, it is enough to note that each \wall--move is a composition of \ti--~and \bubble--moves by virtue of Lemma~\ref{lem:wall}.
\end{proof}

\paragraph{The very technical proof}
We conclude this section with the proof of the fact that each \wall--move is
a composition of \ti--~and \bubble--moves.
We warn the reader that this proof is quite long and technical, so it can be skipped at first.

\begin{proof}[Proof of Lemma~\ref{lem:wall}]
  First of all, we note that, since a negative \wall--move is the inverse of a positive
  \wall--move, it is enough to prove that each positive \wall--move is a
  composition of \ti--~and \bubble--moves.
  As a matter of fact, it is enough to prove that each positive \wall--move is a
  composition of \ti--, \bubble-- and \vigtwo--moves, because each \vigtwo--move
  is a composition of \ti-- and \bubble--moves by virtue of Lemma~\ref{lem:vigtwo}.
  Hence, let us consider a positive \wall--move between two nullhomotopic filling Dehn
  spheres $\Sigma$ and $\Sigma_{D,p}$ of a closed 3--manifold $M$, where $D$ is a closed
  disc embedded in $M$ such that
  $\partial D\subset\Sigma$,
  $\int(D)\cap\Sigma = \emptyset$,
  $\partial D\cap T(\Sigma) = \emptyset$,
  $\partial D\cap S(\Sigma) \neq \emptyset$,
  $\#(\partial D\cap S(\Sigma)) \geqslant 2$.
  In the figures below we will draw the case where $\#(\partial D\cap S(\Sigma))=4$ as in Fig.~\ref{fig:wall}, the other cases being analogous.
  Let us consider a small disc $D'$ near $p$ as shown in
  Fig.~\ref{fig:dprime}.
  \begin{figure}[ht!]
    \centerline{\includegraphics{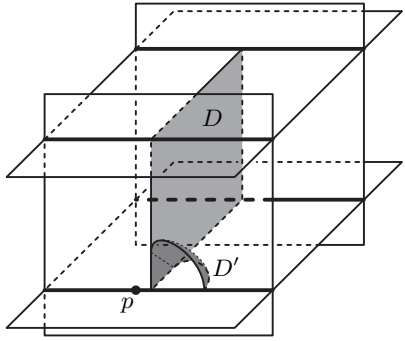}}
    \caption{A small disc $D'$ near $p$.}
    \label{fig:dprime}
  \end{figure}
  We choose $D'$ so that $D\cap D'$ is the small
  triangle shown in Fig.~\ref{fig:dprime}.
  Let $C$ be the connected component of $M\setminus\Sigma$ containing
  $\int(D)$.
  Since $\Sigma$ is filling, we have that $C$ is a ball and that it is divided by the two discs
  $D$ and $D'$ into three balls, one of which, say $C'$, is not incident to the triangle
  $D\cap D'$.
  The idea of the proof is to create a small wall with respect to the triple
  $(\Sigma,D',p)$ and then to move it through the ball
  $C'$.

  Let us start by creating the small wall.
  Let us call $e$ the edge of $\Sigma$ containing the triple point $p$.
  It is divided into two parts by the closure of the triangle $D\cap D'$; let
  us call $e'$ the one intersecting twice the closure of $D'$, and $p'$ the
  triple point at the end of $e'$; see Fig.~\ref{fig:end_point}.
  \begin{figure}[ht!]
    \psfrag{ep}{\small $e'$}
    \centerline{\includegraphics{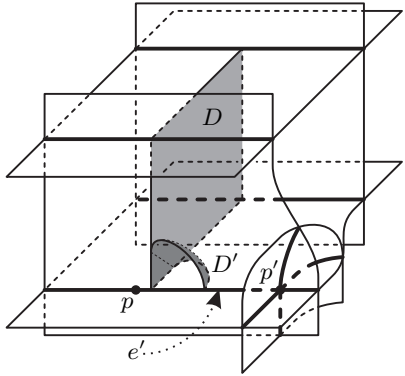}}
    \caption{The choice of the triple point $p'$.}
    \label{fig:end_point}
  \end{figure}
  (Note that $e'$ may intersect more than once the closure of $D$, but this
  does not affect the procedure).
  In order to get $\Sigma_{D',p}$ from $\Sigma$ we apply the moves shown in
  Fig.~\ref{fig:creating_wall}.
  \begin{figure}[ht!]
    \psfrag{T1-1}{\small ${\rm T}_1^{-1}$}
    \centerline{\includegraphics{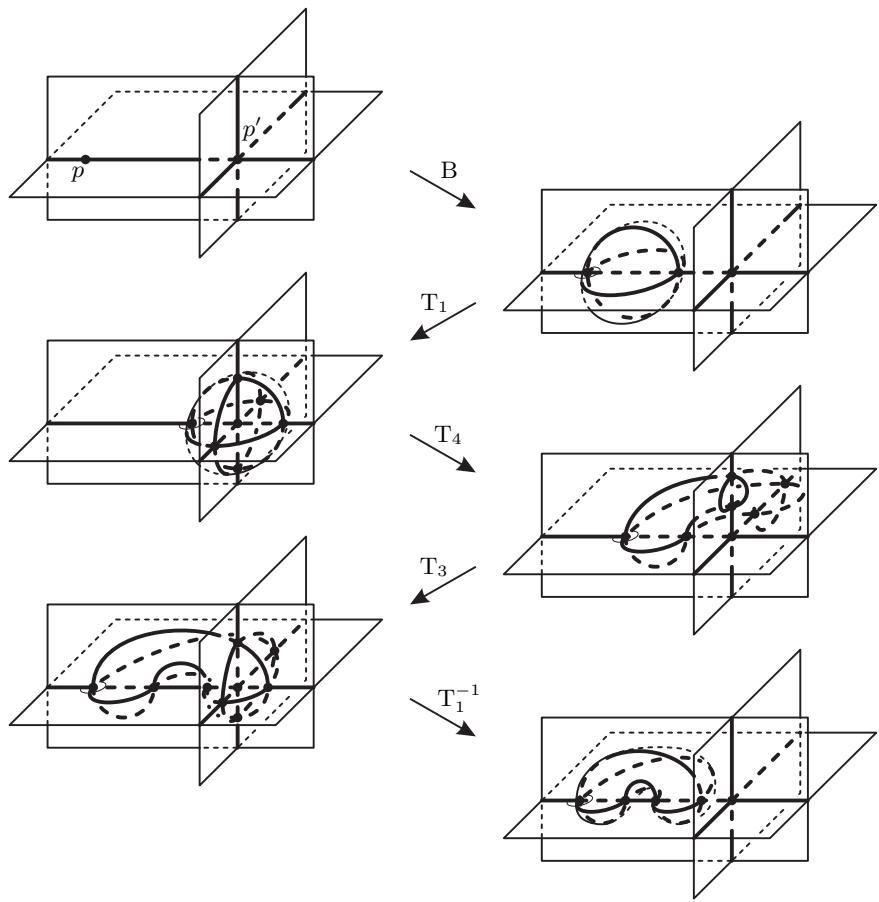}}
    \caption{The creation of the small wall.}
    \label{fig:creating_wall}
  \end{figure}

  We have created the small wall; we now need to move it through the ball
  $C'$.
  Note that, if the two Dehn spheres $\Sigma_{D,p}$ and $\Sigma_{D',p}$ are
  isotopic (this is not the case if $\#(\partial D\cap S(\Sigma))=4$, but this may occur if $\#(\partial D\cap S(\Sigma))=2$), we have done; so we suppose they are not isotopic.

  Firstly, {\em let us suppose that the closure of $C'$ is a closed ball}.
  We can move the disc $D'$ through the ball $C'$ via an isotopy
  keeping fixed the triangle $D\cap D'$ (see Fig.~\ref{fig:disc_moving} for
  an example).
  \begin{figure}[ht!]
    \centerline{\includegraphics{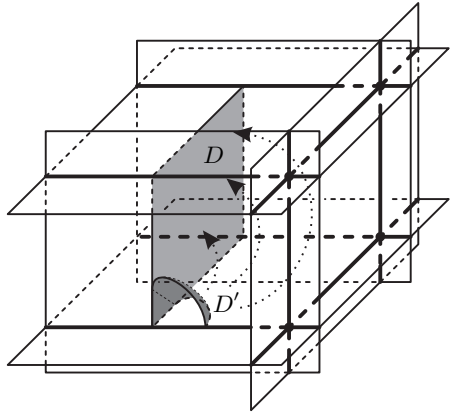}}
    \caption{Moving the disc $D'$ through $C'$.}
    \label{fig:disc_moving}
  \end{figure}
  If we consider also the trivalent graph $S(\Sigma)\cap\partial C'$, a simple
  general position argument tells us that the isotopy can be
  substituted by {\em \onetotwo--moves} and {\em \zerototwo--moves}; see
  Fig.~\ref{fig:3val_moves}.
  \begin{figure}[ht!]
    \centerline{\includegraphics{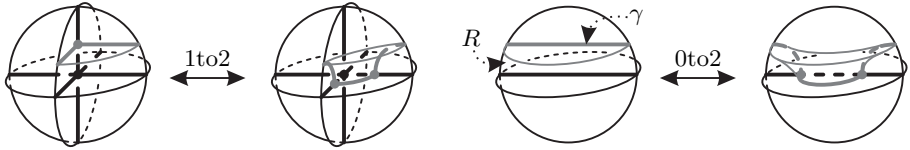}}
    \caption{\onetotwo--move (left) and \zerototwo--move (right).}
    \label{fig:3val_moves}
  \end{figure}
  (In the figures the disc we are moving is drawn in grey.)

  We now prove that each \zerototwo--move is a composition of \onetotwo--~and
  \vigtwo--moves.
  Consider a \zerototwo--move (see Fig.~\ref{fig:3val_moves}-right).
  Let $R$ be the connected component of
  $\Sigma \setminus \big(S(\Sigma)\cup\partial D'\big)$
  that is divided in two after the \zerototwo--move, and $\gamma$
  the arc of $\partial D'$ that is moved after the \zerototwo--move (see
  again 	Fig.~\ref{fig:3val_moves}-right).
  Note that the boundary of $C'$ appears near the portion of it involved in
  the move as in Fig.~\ref{fig:02to12}-left, because $R$ is a disc and the
  endpoints of $\gamma$ are double points of $\Sigma$.
  \begin{figure}[ht!]
    \psfrag{V2i}{\small ${\rm Vig}_2^{-1}$}
    \centerline{\includegraphics{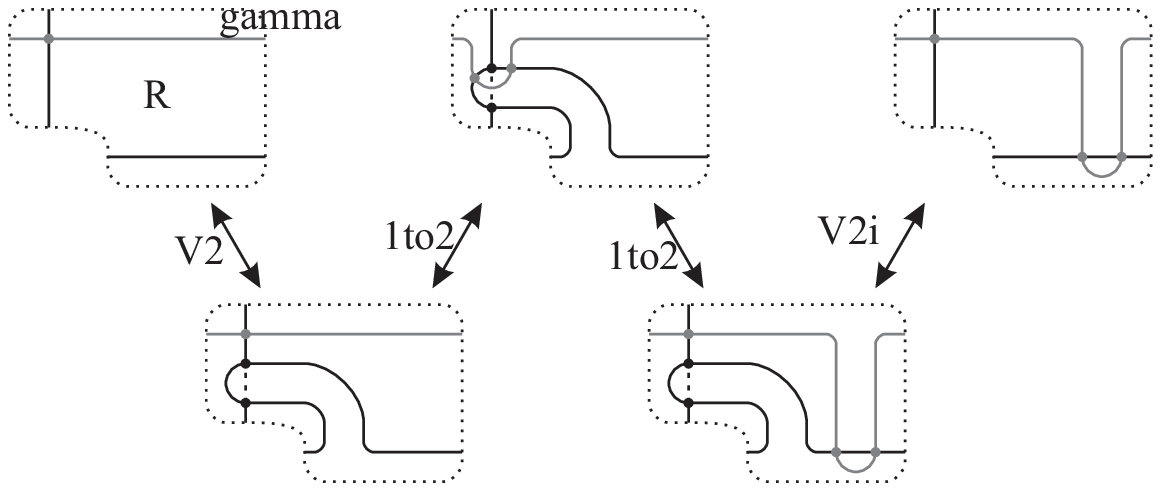}}
    \caption{Each \zerototwo--move is a composition of \onetotwo--~and
      \vigtwo--moves.}
    \label{fig:02to12}
  \end{figure}
  The \onetotwo--~and \vigtwo--moves shown in Fig.~\ref{fig:02to12} are equivalent to the
  \zerototwo--move.
  
  We have proved that the isotopy of $D'$ above can be substituted by \onetotwo--~and
  \vigtwo--moves.
  We now consider the small wall. We cannot apply a \onetotwo--move to $\Sigma_{D',p}$ at once.
  But, if we substitute each \onetotwo--move with a \tone--~and a \tthree--move as shown in
  Fig.~\ref{fig:12toti}, we get a sequence of
  \ti--~and \vigtwo--moves transforming $\Sigma_{D',p}$ into $\Sigma_{D,p}$.
  \begin{figure}[ht!]
    \centerline{\includegraphics{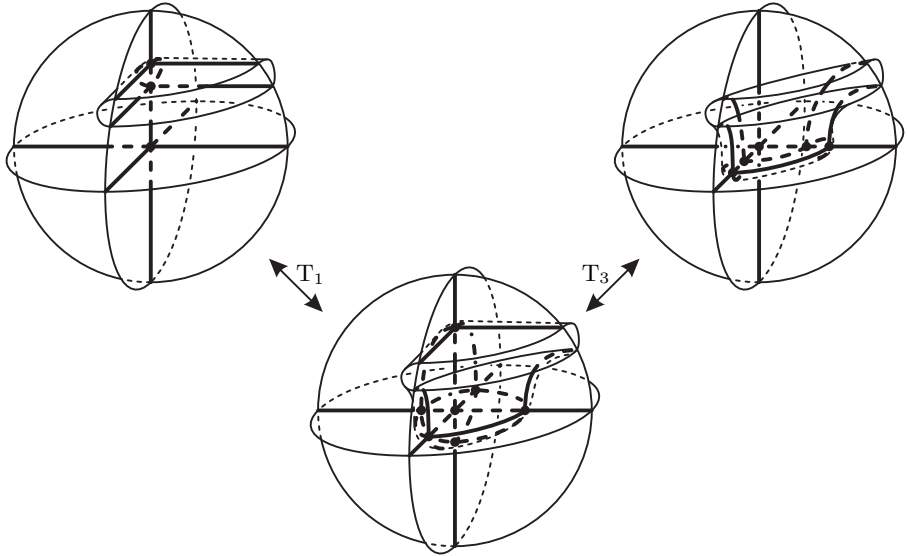}}
    \caption{Each \onetotwo--move is substituted with two \ti--moves.}
    \label{fig:12toti}
  \end{figure}
  (Note that indeed the negative \vigtwo--move preserves fillingness.)
  Hence, we have proved the statement if the closure of $C'$ is a closed ball.

  Consider now the general case; namely, {\em we suppose no more that the closure of $C'$
  is a closed ball}.
  We need to prove that $\Sigma_{D,p}$ can be obtained from
  $\Sigma_{D',p}$ via \ti--, \bubble-- and \vigtwo--moves.
  The technique is analogous to that used in the case already analysed when
  the closure of $C'$ is a closed
  ball, but here we must prepare the Dehn sphere
  $\Sigma_{D',p}$ before moving the small wall.
  For the sake of simplicity, we continue calling $\Sigma_{D',p}$ all
  Dehn spheres obtained throughout the procedure.

  First of all, we apply a positive \bubble--move, a positive spiral piping passing move
  (which is a composition of \ti--moves by virtue of Lemma~\ref{lem:spiral_move}) and a
  \tfour--move; see Fig.~\ref{fig:dprime_emb}.
  \begin{figure}[ht!]
    \centerline{\includegraphics{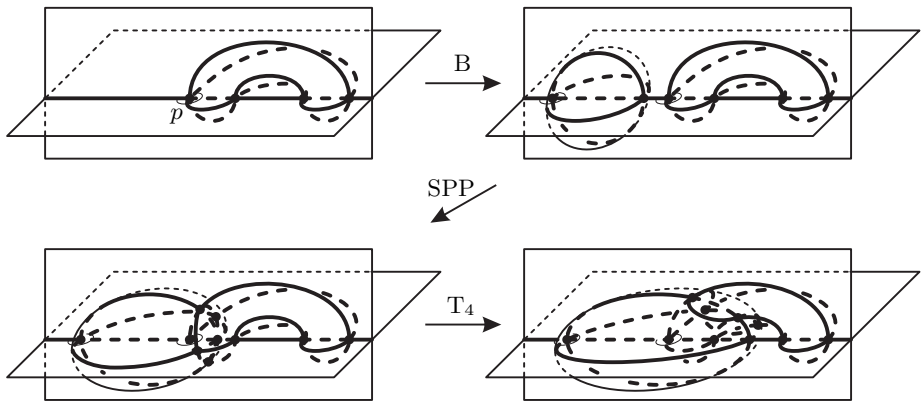}}
    \caption{Preparation before moving the small wall (first part).}
    \label{fig:dprime_emb}
  \end{figure}
  Then, we apply positive \bubble--moves and positive \vigtwo--moves near the boundary of $D$,
  as shown in Fig.~\ref{fig:d_emb}.
  \begin{figure}[ht!]
 	\psfrag{3xB}{\small $3\times\bubble$}
 	\psfrag{3xV2}{\small $3\times\vigtwo$}
    \centerline{\includegraphics{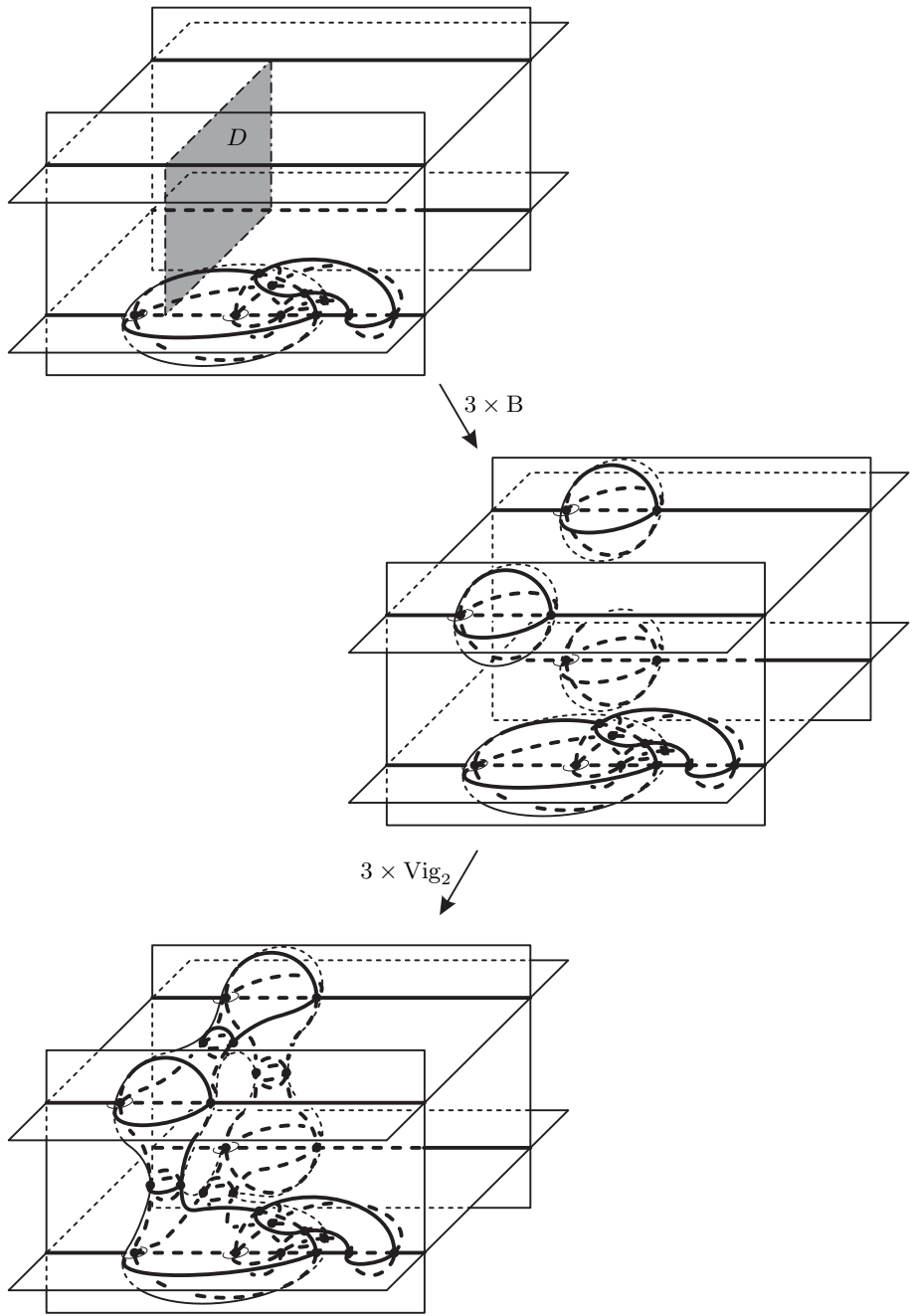}}
    \caption{Preparation before moving the small wall (second part).}
    \label{fig:d_emb}
  \end{figure}
  Afterwards, we apply two positive \tone--~and two \tfour--moves (the result is
  shown in Fig.~\ref{fig:pre_moving}).
  \begin{figure}[ht!]
    \centerline{\includegraphics[width=12cm]{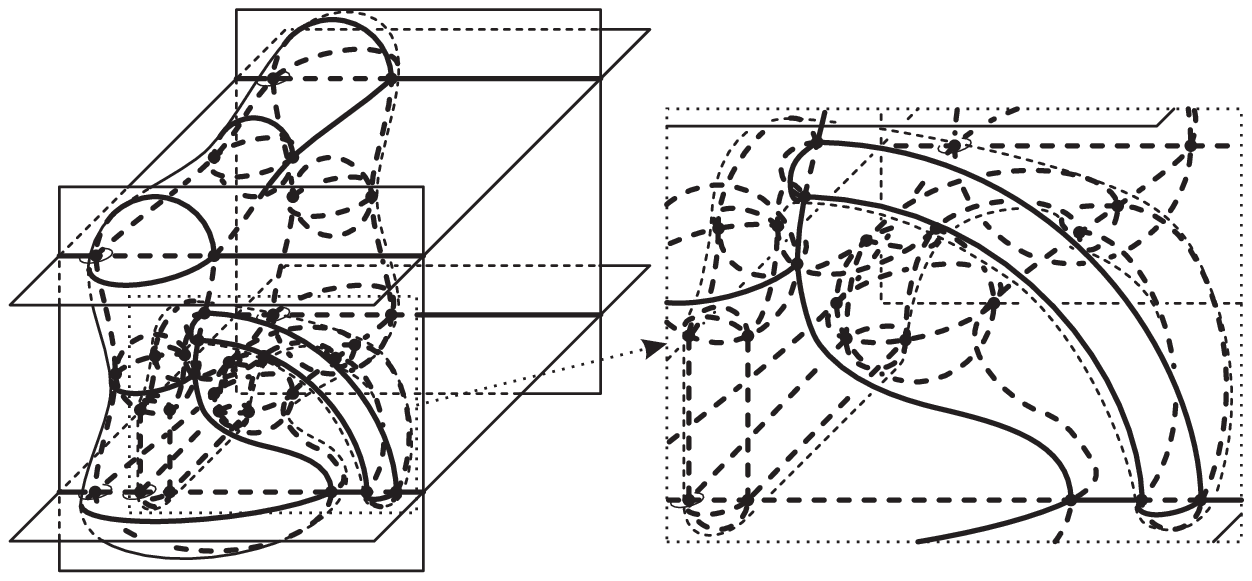}}
    \caption{Preparation before moving the small wall (third part).}
    \label{fig:pre_moving}
  \end{figure}
  Finally, we apply three pairs of a \tone--~and a \tthree--move, as we have
  done when we have replaced the \onetotwo--moves (the result is shown in
  Fig.~\ref{fig:pre_moving_no_self}).
  \begin{figure}[ht!]
    \centerline{\includegraphics{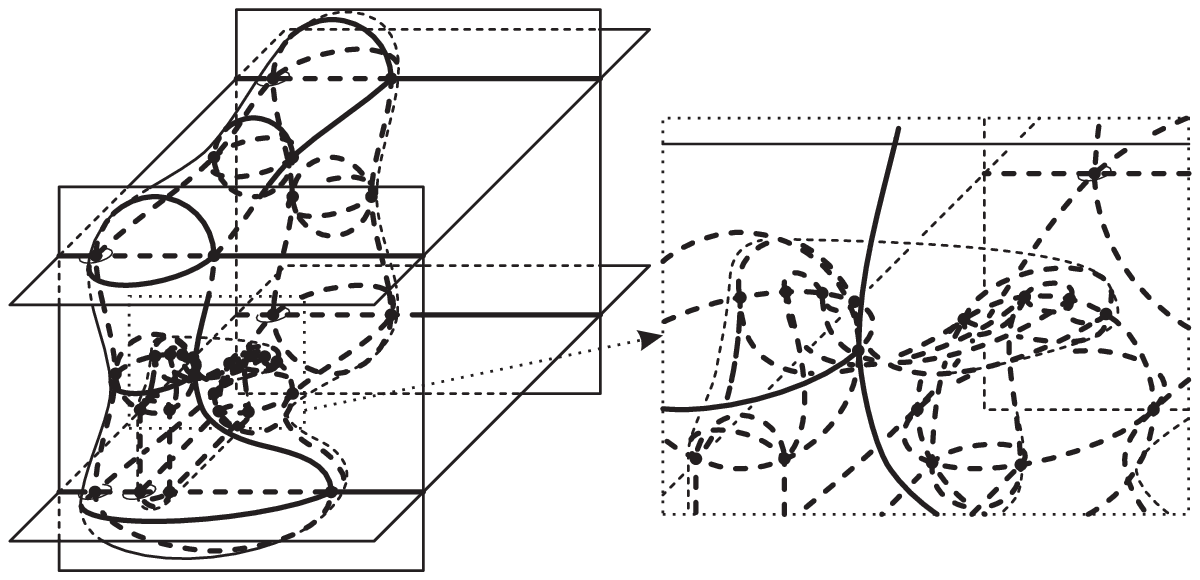}}
    \caption{Preparation before moving the small wall (fourth part).}
    \label{fig:pre_moving_no_self}
  \end{figure}
  Note that here we have used the hypothesis that $\#(\partial D\cap S(\Sigma)) \geqslant 2$.

  At this point, we look at the closure of $C'$.
  It can be thought as an abstract closed ball with some self-identifications
  on the boundary.
  In order to simplify such identifications, we apply a \bubble--move and a
  positive \tone--move for each triple point of $\Sigma_{D',p}$ where we have a
  self-adjacency of the closure of $C'$; then, we apply a positive spiral piping passing move (which is
  a composition of \ti--moves by virtue of Lemma~\ref{lem:spiral_move}) or a positive
  \tone--move for each edge of $\Sigma_{D',p}$ where we have a self-adjacency
  of the closure of $C'$; see Fig.~\ref{fig:cprime_emb}.
  \begin{figure}[ht!]
  	\psfrag{T1SPP}{\small $\tone/{\rm SPP}$}
    \centerline{\includegraphics{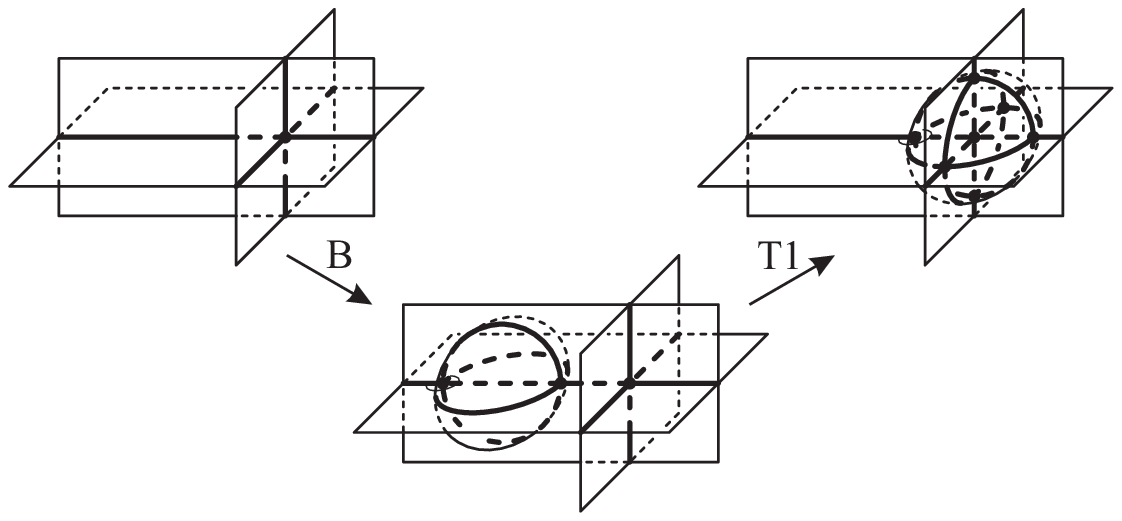}}
    \centerline{\includegraphics{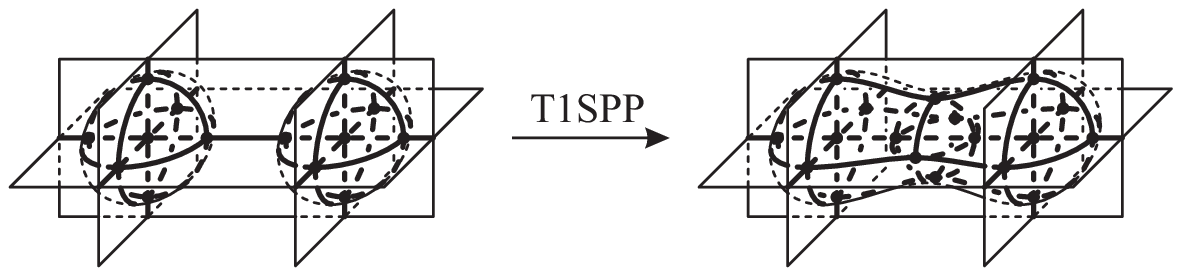}}
    \caption{Moves to simplify the self-adjacency of the closure of $C'$ near each triple point
      (above) and edge (below) of $\Sigma_{D',p}$ where we have a
      self-adjacency of the closure of $C'$.
      (For the edge case, one of the triple points of each
      sphere constructed above is replaced by a spiral piping.)}
    \label{fig:cprime_emb}
  \end{figure}
  Note that we must avoid that two spiral pipings are on the same edge, for otherwise we can apply neither the positive spiral piping passing move nor the positive \tone--move;
  this
  can be achieved by considering that, since $S(\Sigma_{D',p})$ is a hexavalent graph,
  we can choose for each triple point
  an edge adjacent to it so that each edge is chosen for at most one triple point
  (this holds for each graph containing no connected component
  that is a tree).
  Note also that the small wall is not affected by these moves.
  Now, the self-identifications are along discs, each of which is contained in a region of $\Sigma$ and can be thought as small as we
  want (with respect to $C'$).
  
  We can finally move the small wall through the ball $C'$, as we have done above
  in the case when the closure of $C'$ is a ball.
  We just must be careful because the closure of $C'$ is not a ball; however,
  the isotopy of $D'$ can be chosen so that $D'$ is always incident to one
  side of each self-adjacency disc at most.
  With such an isotopy we can repeat the procedure done above in the case
  when the closure of $C'$ is a ball.
  The result is shown in Fig.~\ref{fig:after_moving}.
  \begin{figure}[ht!]
    \centerline{\includegraphics{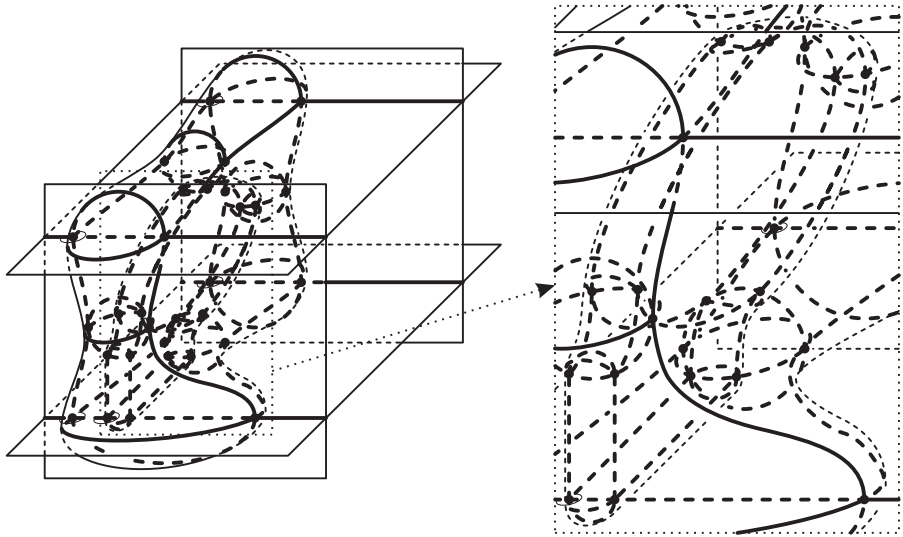}}
    \caption{Configuration after moving the small wall through $C'$.}
    \label{fig:after_moving}
  \end{figure}

  In order to conclude, we firstly apply, in reverse order, the moves done above to simplify the self-adjacency of the closure of $C'$ (see Fig.~\ref{fig:cprime_emb}).
  Afterwards, we apply some \ti--moves to put the wall in the right position; the result is shown in Fig.~\ref{fig:wall_done}.
  \begin{figure}[ht!]
	\psfrag{p1}{\small $p_1$}
	\psfrag{p2}{\small $p_2$}
	\psfrag{p3}{\small $p_3$}
	\psfrag{p4}{\small $p_4$}
	\psfrag{p5}{\small $p_5$}
	\psfrag{p6}{\small $p_6$}
	\psfrag{p7}{\small $p_7$}
    \centerline{\includegraphics{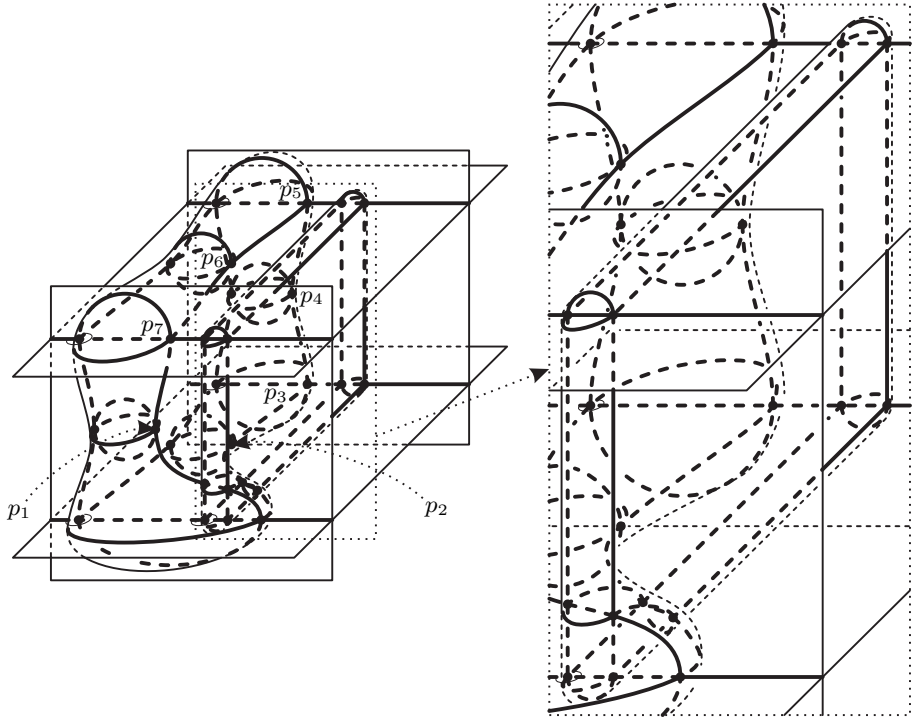}}
    \caption{The wall is in the right position.}
    \label{fig:wall_done}
  \end{figure}
  Note that there are many possibilities for accomplishing this task;
  for instance, one is to apply
  \begin{itemize}
    \item
    twice a positive and a negative \tthree--move
    (to move the wall across the triple points $p_1$ and $p_2$),
    \item
    twice a positive \tone--~and a negative \tthree--move
    (to move the wall across $p_4$ and $p_6$),
    \item
    thrice a positive \tthree--~and a negative \tone--move
    (to move the wall across $p_3$, $p_5$ and $p_7$);
  \end{itemize}
  see Fig.~\ref{fig:wall_done}.
  If we have $\#(\partial D\cap S(\Sigma)) \neq 4$, the situation is analogous; in fact, only the number of pairs of moves changes.
  Finally, we apply the moves shown in Fig.~\ref{fig:dprime_emb}
  and~\ref{fig:d_emb} in reverse order.
  The result is $\Sigma_{D,p}$, which has been obtained from $\Sigma$ via
  \ti--~and \bubble--moves.
  So the proof is complete.
\end{proof}

\section{The invariant}

After establishing Theorem~\ref{teo:calculus}, we are in a position to
define the invariant.

Let $\Sigma$ be a nullhomotopic filling Dehn sphere of a closed 3--manifold $M$.
Recall that $T(\Sigma)$ is the set of triple points of $\Sigma$ and
that $\Sigma\setminus S(\Sigma)$ ({\em i.e.}~the set of simple points)
is made up of disjoint discs.
Let us call $C(\Sigma)$ the class of these discs.

Moreover, let $\calF$ be a finite set consisting of $m>1$ elements
(called {\em colours}).
An $\calF$--{\em colouring} of $\Sigma$ is a map
$\varphi \co C(\Sigma) \to \calF$.
The set of all $\calF$--colourings of $\Sigma$ is denoted by
$\Phi_{\calF}(\Sigma)$.
If $\widetilde C\subset C(\Sigma)$, we denote by $\Phi_{\calF}(\widetilde C)$
the set of the maps $\varphi \co \widetilde C \to {\calF}$.
Note that $\Phi_{\calF}(\Sigma)$ can be identified with
$\Phi_{\calF}(C(\Sigma)\setminus\widetilde C) \times \Phi_{\calF}(\widetilde C)$.
If $\varphi$ is an ${\calF}$--colouring of $\Sigma$, we can associate a
{\em symbol}
$$p^\varphi := \twelvejsym{a_1 & b_1 & c_1}{a_2 & b_2 & c_2}{a_3 & b_3 & c_3}{a_4 & b_4 & c_4}$$
to each $p\in T(\Sigma)$, where the $a_*$'s, the $b_*$'s and the $c_*$'s are shown in Fig.~\ref{fig:col_vert}.
\begin{figure}[ht!]
  \psfrag{p}{\small $p$}
  \psfrag{a1}{\small $a_1$}
  \psfrag{a2}{\small $a_2$}
  \psfrag{a3}{\small $a_3$}
  \psfrag{a4}{\small $a_4$}
  \psfrag{b1}{\small $b_1$}
  \psfrag{b2}{\small $b_2$}
  \psfrag{b3}{\small $b_3$}
  \psfrag{b4}{\small $b_4$}
  \psfrag{c1}{\small $c_1$}
  \psfrag{c2}{\small $c_2$}
  \psfrag{c3}{\small $c_3$}
  \psfrag{c4}{\small $c_4$}
  \centerline{\includegraphics{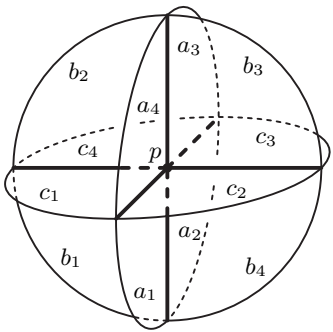}}
  \caption{Colours near a triple point $p$.}
  \label{fig:col_vert}
\end{figure}
Since this definition involves some choices about the identification
of the neighbourhood of $p$ with the abstract picture above, we assume
that each symbol is invariant under changing of this identification.
More precisely, we assume that the identities
$$
\twelvejsym{a_1 & b_1 & c_1}{a_2 & b_2 & c_2}{a_3 & b_3 & c_3}{a_4 & b_4 & c_4}
=
\twelvejsym{c_1 & a_1 & b_1}{c_2 & a_2 & b_2}{c_3 & a_3 & b_3}{c_4 & a_4 & b_4}
=
\twelvejsym{a_4 & c_1 & b_2}{a_1 & c_4 & b_3}{a_2 & c_3 & b_4}{a_3 & c_2 & b_1}
=
\twelvejsym{a_1 & b_4 & c_2}{a_2 & b_3 & c_1}{a_3 & b_2 & c_4}{a_4 & b_1 & c_3}
$$
hold for all $a_*, b_*, c_* \in\calF$.
These yield all the identities corresponding to changes of the identification, because the whole symmetry group of the triple-point neighbourhood (being a semidirect product $(\mathbb Z/_{2\mathbb Z})^3\rtimes{\cal S}_3$, with $48$ elements) is generated by the following three symmetries:
\begin{itemize}
\item the order-3 rotation sending $a_1$ to $b_1$, $b_1$ to $c_1$, and $c_1$ to $a_1$;
\item the order-4 rotation around an horizontal axis, sending $a_1$ to $a_2$;
\item the reflection in the plane containing the $a_*$'s.
\end{itemize}
Note that the triple-point neighbourhood is not completely symmetric; for instance, no symmetry can interchange the region germ coloured by $a_1$ with that coloured by $a_2$, fixing those coloured by $a_3$ and $a_4$.
To be precise, we should distinguish a symbol from its equivalence class; nevertheless,
for the sake of simplicity, we do not make the notation heavier and we use the same notation for both the symbol and its equivalence class.

Let us consider now the polynomial ring $\calR :=\field[s_1,\ldots,s_N]$, where $\field$ is a field
and the $s_*$'s are (the equivalence classes of) the symbols.
The polynomial
$$
SS_m(\Sigma) := \sum_{\varphi\in\Phi_{\calF}(\Sigma)}
\Bigg(\prod_{p\in T(\Sigma)} p^\varphi\Bigg)
$$
of $\calR$ is called {\em state sum of $\Sigma$ of type $m$}.
Note that $SS_m(\Sigma)$ is an invariant of $\Sigma$, but it is not an
invariant of $M$; in fact, it depends on the particular nullhomotopic filling Dehn sphere $\Sigma$ of $M$.

Consider for instance an \saddle--move between $\Sigma$ and $\Sigma'$ (see
Fig.~\ref{fig:s}).
Consider also a colouring of $\Sigma$ and a colouring of $\Sigma'$ matching each other out of the
portions involved in the move; see Fig.~\ref{fig:saddle_col} for the notation.
\begin{figure}[ht!]
  \psfrag{p1}{\small $p_1$}
  \psfrag{p2}{\small $p_2$}
  \psfrag{p3}{\small $p_3$}
  \psfrag{p1p}{\small $p_{1'}$}
  \psfrag{p2p}{\small $p_{2'}$}
  \psfrag{p3p}{\small $p_{3'}$}
  \psfrag{alpha1}{\small $\alpha_1$}
  \psfrag{alpha2}{\small $\alpha_2$}
  \psfrag{alpha3}{\small $\alpha_3$}
  \psfrag{alpha4}{\small $\alpha_4$}
  \psfrag{alpha5}{\small $\alpha_5$}
  \psfrag{alpha6}{\small $\alpha_6$}
  \psfrag{A1}{\small $A_1$}
  \psfrag{A2}{\small $A_2$}
  \psfrag{A3}{\small $A_3$}
  \psfrag{A4}{\small $A_4$}
  \psfrag{A5}{\small $A_5$}
  \psfrag{A6}{\small $A_6$}
  \psfrag{a1}{\small $a_1$}
  \psfrag{a2}{\small $a_2$}
  \psfrag{a3}{\small $a_3$}
  \psfrag{a4}{\small $a_4$}
  \psfrag{a5}{\small $a_5$}
  \psfrag{a6}{\small $a_6$}
  \psfrag{X}{\small $X$}
  \psfrag{Y}{\small $Y$}
  \psfrag{x}{\small $x$}
  \psfrag{y}{\small $y$}
  \centerline{\includegraphics{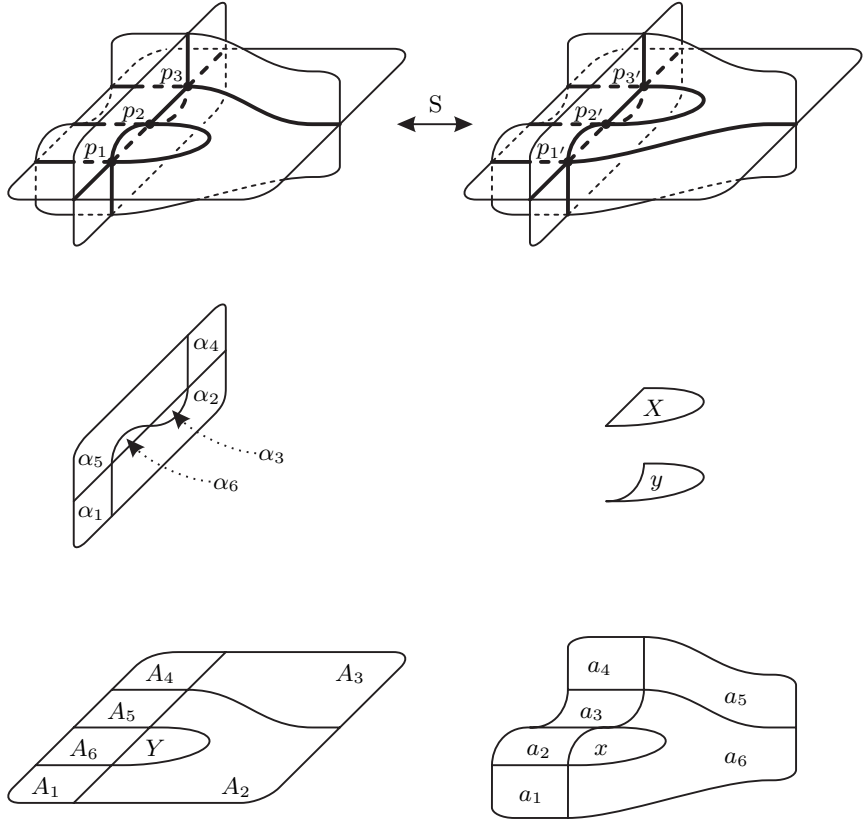}}
  \caption{An \saddle--move between two nullhomotopic filling Dehn spheres with colourings (matching each other
    out of the portions involved in the move).}
  \label{fig:saddle_col}
\end{figure}
We have
\begin{eqnarray*}
SS_m(\Sigma) & = &
\sum_{\varphi\in\Phi_{\calF}(\Sigma)}
\Bigg(\prod_{p\in T(\Sigma)} p^\varphi\Bigg) =
\sum_{\varphi\in\Phi_{\calF}(\Sigma)}
\Bigg(\prod_{\doubleidx{p\in T(\Sigma)\setminus}{\{p_1,p_2,p_3\}}}
p^\varphi\Bigg) (p_1^\varphi p_2^\varphi p_3^\varphi) = \\
& = & \sum_{\doubleidx{\varphi_1\in\Phi_{\calF}(\Sigma\setminus}{\{C_x,C_Y\})}}
\Bigg(\prod_{\doubleidx{p\in T(\Sigma)\setminus}{\{p_1,p_2,p_3\}}}
p^{\varphi_1}\Bigg)
\Bigg(\sum_{\doubleidx{\varphi_2\in\Phi_{\calF}(\{C_x,C_Y\})}{}}
p_1^{\overline{\varphi}} p_2^{\overline{\varphi}} p_3^{\overline{\varphi}}\Bigg),
\end{eqnarray*}
where $\overline{\varphi}$ is the ${\calF}$--colouring given by $\varphi_1$ and
$\varphi_2$; in the last equality we have applied the distributive 	property and the fact that $\varphi_1=\overline{\varphi}$ near every $p\neq p_i\ \forall i=1,2,3$.
Analogously, we have
$$
SS_m(\Sigma') =
\sum_{\doubleidx{\varphi'_1\in\Phi_{\calF}(\Sigma'\setminus}{\{C_X,C_y\})}}
\Bigg(\prod_{\doubleidx{p\in T(\Sigma')\setminus}{\{p_{1'},p_{2'},p_{3'}\}}}
p^{\varphi'_1}\Bigg)
\Bigg(\sum_{\doubleidx{\varphi'_2\in\Phi_{\calF}(\{C_X,C_y\})}{}}
p_{1'}^{\overline{\varphi}'} p_{2'}^{\overline{\varphi}'} p_{3'}^{\overline{\varphi}'}\Bigg),
$$
where $\overline{\varphi}'$ is the $\calF$--colouring given by $\varphi'_1$ and
$\varphi'_2$.
Since $\Sigma$ and $\Sigma'$ coincide out of the portion involved in the move, we have, with a slight abuse of notation,
\begin{eqnarray*}
SS_m(\Sigma) - SS_m(\Sigma') =
\sum_{\doubleidx{\varphi_1\in\Phi_{\calF}(\Sigma\setminus}{\{C_x,C_Y\})}}
\Bigg(\prod_{\doubleidx{p\in T(\Sigma)\setminus}{\{p_1,p_2,p_3\}}}
p^{\varphi_1}\Bigg) \cdot \\
\cdot
\Bigg(
\sum_{\doubleidx{\varphi_2\in\Phi_{\calF}(\{C_x,C_Y\})}{}}
p_1^{\overline{\varphi}} p_2^{\overline{\varphi}} p_3^{\overline{\varphi}}
-
\sum_{\doubleidx{\varphi'_2\in\Phi_{\calF}(\{C_X,C_y\})}{}}
p_{1'}^{\overline{\varphi}'} p_{2'}^{\overline{\varphi}'} p_{3'}^{\overline{\varphi}'}
\Bigg);
\end{eqnarray*}
therefore, the difference between $SS_m(\Sigma)$ and $SS_m(\Sigma')$ is an element of the ideal
generated by
\begin{eqnarray*}
  \sum_{x, Y \in \calF}
  \twelvejsym{\alpha_1 & a_1 & A_1}{\alpha_2 & a_2 & A_2}{\alpha_6 & x & Y}{\alpha_5 & a_6 & A_6}
  \twelvejsym{\alpha_2 & a_3 & A_6}{\alpha_3 & a_2 & Y}{\alpha_5 & x & A_2}{\alpha_6 & a_6 & A_5}
  \twelvejsym{\alpha_3 & a_3 & A_5}{\alpha_2 & a_4 & A_2}{\alpha_4 & a_5 & A_3}{\alpha_5 & a_6 & A_4}
  -
  \sum_{X, y \in \calF}
  \twelvejsym{\alpha_1 & a_1 & A_1}{\alpha_2 & a_2 & A_2}{\alpha_6 & a_5 & A_3}{\alpha_5 & a_6 & A_6}
  \twelvejsym{\alpha_2 & a_3 & A_6}{\alpha_3 & a_2 & A_3}{\alpha_5 & a_5 & X}{\alpha_6 & y & A_5}
  \twelvejsym{\alpha_3 & a_3 & A_5}{\alpha_2 & a_4 & X}{\alpha_4 & a_5 & A_3}{\alpha_5 & y & A_4}
\end{eqnarray*}
for all
$a_1,\ldots,a_6,A_1,\ldots,A_6,\alpha_1,\ldots,\alpha_6 \in \calF$.

Analogously, \ti--~and \bubble--moves give rise to the following generators.

The \tone--move gives rise to
$$
\twelvejsym{\alpha_6 & a_2 & A_1}{\alpha_3 & a_3 & A_2}{\alpha_4 & a_4 & A_3}{\alpha_5 & a_5 & A_4}
\twelvejsym{\alpha_1 & a_1 & B_1}{\alpha_2 & a_2 & B_2}{\alpha_3 & a_5 & B_3}{\alpha_6 & a_6 & B_4}
-$$
$$
\sum_{X_\star, Y_\star, z_\star, \zeta_\star \in \calF}
\twelvejsym{\alpha_1 & a_1 & X_1}{\alpha_2 & z_1 & X_2}{\zeta_2 & z_2 & X_3}{\zeta_1 & a_6 & X_4}
\twelvejsym{\zeta_1 & z_1 & Y_1}{\zeta_2 & a_3 & Y_2}{\alpha_4 & a_4 & Y_3}{\alpha_5 & z_2 & Y_4}
\twelvejsym{\alpha_6 & B_1 & A_1}{\alpha_1 & Y_1 & A_2}{\zeta_1 & Y_2 & X_2}{\alpha_5 & B_2 & X_1}
\twelvejsym{B_2 & a_6 & X_2}{B_3 & z_2 & A_2}{Y_3 & a_4 & A_3}{Y_2 & a_5 & X_3}
\twelvejsym{\alpha_2 & B_4 & X_4}{\alpha_3 & Y_4 & X_3}{\alpha_4 & Y_3 & A_3}{\zeta_2 & B_3 & A_4}
\twelvejsym{B_1 & a_2 & A_1}{B_4 & a_3 & X_1}{Y_4 & z_1 & X_4}{Y_1 & a_1 & A_4}
$$
for all
$a_1,\ldots,a_6,A_1,\ldots,A_4,B_1,\ldots,B_4,\alpha_1,\ldots,\alpha_6 \in \calF$.

The \ttwo--move gives rise to
$$
\sum_{x_\star \in \calF}
\twelvejsym{\alpha_8 & x_1 & A_2}{\alpha_3 & x_2 & A_3}{\alpha_4 & a_4 & A_4}{\alpha_7 & a_5 & A_5}
\twelvejsym{\alpha_1 & a_1 & \beta_8}{\alpha_2 & x_1 & \beta_1}{\alpha_3 & a_5 & \beta_2}{\alpha_8 & a_6 & \beta_3}
\twelvejsym{\beta_8 & a_1 & A_1}{\beta_3 & a_2 & A_2}{\beta_4 & x_2 & A_5}{\beta_7 & x_1 & A_6}
\twelvejsym{\alpha_7 & x_2 & \beta_7}{\alpha_4 & a_2 & \beta_6}{\alpha_5 & a_3 & \beta_5}{\alpha_6 & a_4 & \beta_4}
-$$
$$
\sum_{Y_\star \in \calF}
\twelvejsym{\alpha_1 & a_1 & A_1}{\alpha_2 & a_2 & Y_1}{\alpha_5 & a_3 & Y_2}{\alpha_6 & a_6 & A_6}
\twelvejsym{\alpha_8 & \beta_8 & A_2}{\alpha_1 & \beta_7 & A_3}{\alpha_6 & \beta_6 & Y_1}{\alpha_7 & \beta_1 & A_1}
\twelvejsym{\beta_1 & a_6 & Y_1}{\beta_2 & a_3 & A_3}{\beta_5 & a_4 & A_4}{\beta_6 & a_5 & Y_2}
\twelvejsym{\alpha_2 & \beta_3 & A_6}{\alpha_3 & \beta_4 & Y_2}{\alpha_4 & \beta_5 & A_4}{\alpha_5 & \beta_2 & A_5}
$$
for all
$a_1,\ldots,a_6,A_1,\ldots,A_6,\alpha_1,\ldots,\alpha_8,\beta_1,\ldots,\beta_8 \in \calF$.

The \tthree--move gives rise to
$$
\sum_{x \in \calF}
\twelvejsym{\alpha_6 & x & A_2}{\alpha_3 & a_3 & A_3}{\alpha_4 & a_4 & A_4}{\alpha_5 & a_5 & A_5}
\twelvejsym{\alpha_1 & a_1 & \beta_6}{\alpha_2 & x & \beta_1}{\alpha_3 & a_5 & \beta_2}{\alpha_6 & a_6 & \beta_3}
\twelvejsym{\beta_6 & a_1 & A_1}{\beta_3 & a_2 & A_2}{\beta_4 & a_3 & A_5}{\beta_5 & x & A_6}
-$$
$$
\sum_{Y_\star, v, \mu_\star, \zeta_\star \in \calF}
\twelvejsym{\alpha_1 & a_1 & A_1}{\alpha_2 & a_2 & Y_1}{\zeta_2 & v & Y_2}{\zeta_1 & a_6 & A_6}
\twelvejsym{\zeta_1 & a_2 & \beta_5}{\zeta_2 & a_3 & \mu_1}{\alpha_4 & a_4 & \mu_2}{\alpha_5 & v & \beta_4}
\twelvejsym{\alpha_6 & \beta_6 & A_2}{\alpha_1 & \beta_5 & A_3}{\zeta_1 & \mu_1 & Y_1}{\alpha_5 & \beta_1 & A_1}
\twelvejsym{\beta_1 & a_6 & Y_1}{\beta_2 & v & A_3}{\mu_2 & a_4 & A_4}{\mu_1 & a_5 & Y_2}
\twelvejsym{\alpha_2 & \beta_3 & A_6}{\alpha_3 & \beta_4 & Y_2}{\alpha_4 & \mu_2 & A_4}{\zeta_2 & \beta_2 & A_5}
$$
for all
$a_1,\ldots,a_6,A_1,\ldots,A_6,\alpha_1,\ldots,\alpha_6,\beta_1,\ldots,\beta_6 \in \calF$.

The \tfour--move gives rise to
$$
\sum_{x, Y, \mu, \zeta \in \calF}
\twelvejsym{\alpha_6 & x & A_2}{\zeta & a_3 & A_3}{\alpha_4 & a_4 & A_4}{\alpha_5 & a_5 & Y}
\twelvejsym{\alpha_1 & a_1 & \beta_6}{\alpha_2 & x & \beta_1}{\zeta & a_5 & \beta_2}{\alpha_6 & a_6 & \mu}
\twelvejsym{\beta_6 & a_1 & A_1}{\mu & a_2 & A_2}{\beta_4 & a_3 & Y}{\beta_5 & x & A_6}
\twelvejsym{\zeta & \mu & Y}{\alpha_2 & \beta_4 & A_4}{\alpha_3 & \beta_3 & A_5}{\alpha_4 & \beta_2 & A_6}
-$$
$$
\sum_{x', Y', \mu', \zeta' \in \calF}
\twelvejsym{\alpha_1 & a_1 & A_1}{\alpha_2 & a_2 & Y'}{\alpha_3 & x' & A_5}{\zeta' & a_6 & A_6}
\twelvejsym{\zeta' & a_2 & \beta_5}{\alpha_3 & a_3 & \mu'}{\alpha_4 & a_4 & \beta_3}{\alpha_5 & x' & \beta_4}
\twelvejsym{\alpha_6 & \beta_6 & A_2}{\alpha_1 & \beta_5 & A_3}{\zeta' & \mu' & Y'}{\alpha_5 & \beta_1 & A_1}
\twelvejsym{\beta_1 & a_6 & Y'}{\beta_2 & x' & A_3}{\beta_3 & a_4 & A_4}{\mu' & a_5 & A_5}
$$
for all
$a_1,\ldots,a_6,A_1,\ldots,A_6,\alpha_1,\ldots,\alpha_6,\beta_1,\ldots,\beta_6 \in \calF$.

The \bubble--move gives rise to
$$
\sum_{v_\star, x_\star, Y_\star, \mu_\star, \zeta_\star \in \calF}
\twelvejsym{\zeta_1 & a_1 & A_1}{\zeta_2 & a_2 & Y_1}{\zeta_3 & x_2 & Y_6}{\zeta_4 & x_1 & A_2}
\twelvejsym{\zeta_1 & x_1 & Y_1}{\mu_1 & x_2 & A_1}{\mu_2 & x_3 & Y_5}{\zeta_4 & x_4 & Y_6}
\twelvejsym{\zeta_1 & x_4 & A_1}{\mu_1 & x_3 & Y_2}{\mu_2 & x_2 & Y_4}{\zeta_4 & x_1 & Y_5}
\twelvejsym{\zeta_1 & x_1 & Y_2}{\zeta_2 & x_2 & A_1}{\zeta_3 & a_2 & Y_3}{\zeta_4 & a_1 & Y_4}
\twelvejsym{\mu_1 & v_1 & Y_5}{a_1 & v_2 & Y_4}{a_2 & \zeta_3 & Y_3}{\mu_2 & \zeta_2 & A_1}
\twelvejsym{\mu_1 & \zeta_2 & Y_6}{a_1 & \zeta_3 & Y_5}{a_2 & v_2 & A_1}{\mu_2 & v_1 & A_2}
$$
for all
$a_1,a_2,A_1,A_2 \in \calF$.

Finally, let $I_m$ be the ideal of $\calR$ generated by the polynomials (just listed)
deduced from the \ti--, \bubble-- and~\saddle--moves.
We are now in a position to define the invariant and to prove that it depends, indeed, only on $M$.

\begin{teo}
  The coset
  $$
  inv_m(M) = SS_m(\Sigma) + I_m \in\calR /I_m
  $$
  does not depend on the particular nullhomotopic filling Dehn sphere $\Sigma$ presenting the closed $3$--manifold $M$, and thus it is an invariant of $M$.
\end{teo}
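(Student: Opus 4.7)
The plan is to exploit Theorem~\ref{teo:calculus}: any two nullhomotopic filling Dehn spheres of the same closed 3--manifold $M$ are connected by a finite sequence of \ti--, \bubble-- and \saddle--moves. It therefore suffices to show that, whenever $\Sigma'$ is obtained from $\Sigma$ by a single such move, the difference $SS_m(\Sigma)-SS_m(\Sigma')$ lies in $I_m$. Once this is established, the coset $SS_m(\Sigma)+I_m$ is unchanged under each elementary move, and hence depends only on the presented 3--manifold $M$.

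For each move, the argument follows exactly the template already worked out for the \saddle--move in the preceding discussion. The crucial input is locality: let $U\subset\Sigma$ and $U'\subset\Sigma'$ be the portions altered by the move, and let $P\subset T(\Sigma)$ and $P'\subset T(\Sigma')$ be the triple points contained in them. Because $\Sigma$ and $\Sigma'$ coincide outside $U$ and $U'$, the ``external'' region classes of $\Sigma$ and $\Sigma'$ are canonically identified, and an $\calF$--colouring of either decomposes as a pair $(\varphi_1,\varphi_2)$ made of an external colouring $\varphi_1$ (fixed on the regions on $\partial U=\partial U'$) and an internal colouring $\varphi_2$ of the interior regions. Applying the distributive law, I would write
$$SS_m(\Sigma)=\sum_{\varphi_1}\Bigl(\prod_{p\notin P}p^{\varphi_1}\Bigr)\cdot\Bigl(\sum_{\varphi_2}\prod_{p\in P}p^{(\varphi_1,\varphi_2)}\Bigr),$$
and analogously for $SS_m(\Sigma')$. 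The external factor is common to both expressions, so
$$SS_m(\Sigma)-SS_m(\Sigma')=\sum_{\varphi_1}\Bigl(\prod_{p\notin P}p^{\varphi_1}\Bigr)\cdot\Delta_{\varphi_1},$$
where $\Delta_{\varphi_1}$ denotes the local difference of inner sums, taken with the colours on $\partial U$ fixed by $\varphi_1$.

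The core of the proof is then the verification that $\Delta_{\varphi_1}$ is, for every boundary colouring, precisely the generator of $I_m$ attached to that move. This is straightforward: the generators listed above were in fact obtained by writing out these inner sums (with the boundary variables denoted $a_\star$, $A_\star$, $\alpha_\star$, $\beta_\star$) for each of the six moves, using the explicit pictures of the portions involved. Thus $\Delta_{\varphi_1}\in I_m$ for every $\varphi_1$, and summing over external colourings keeps the whole expression inside $I_m$. The main obstacle in the argument is not conceptual but bookkeeping: one must enumerate the triple points of each portion, identify the correct region germs at each of them with the entries of the $12j$--symbols via the symmetries already recorded (so that the local sum is written in the normalised form of the listed generator), and check that the boundary colours match between $\Sigma$ and $\Sigma'$. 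Once this case-by-case verification is carried out for all six move types, the theorem follows immediately from Theorem~\ref{teo:calculus}.
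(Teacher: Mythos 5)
Your proposal is correct and follows essentially the same approach as the paper: invoke Theorem~\ref{teo:calculus} to reduce to single moves, then use the locality of the moves to factor the state sum into an external part (unchanged by the move) times a local inner sum, so that the difference of state sums is a multiple of the corresponding generator of $I_m$. The paper carries out this decomposition explicitly for the \saddle--move and defines $I_m$ as the ideal generated by the resulting local relations for all six move types, after which the formal proof of the theorem is immediate; your write-up simply makes the general template and the case-by-case bookkeeping more explicit.
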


\begin{proof}
  We need to prove that, if $\Sigma_1$ and $\Sigma_2$ are nullhomotopic filling
  Dehn spheres of $M$, then $inv_m(\Sigma_1) = inv_m(\Sigma_2)$.
  By virtue of Theorem~\ref{teo:calculus}, we have that $\Sigma_1$ and $\Sigma_2$ can be
  obtained from each other via a sequence of \ti--, \bubble-- and
  \saddle--moves; hence, the difference between $SS_m(\Sigma_1)$ and $SS_m(\Sigma_2)$
  is an element of the ideal $I_m$.
\end{proof}

\paragraph{Other invariants}
From the invariant $inv_m$, a number of other invariants can be constructed.
(See~\cite{King} for the Turaev--Viro version of these modifications.)
Here we list the ideas behind four of them.
\begin{itemize}
\item[1.]
{\em Colouring edges}

In the definition of the invariant $inv_m$, we have taken into account colourings of the regions of a nullhomotopic filling Dehn sphere $\Sigma$.
The idea is to colour also the edges of $\Sigma$.
Hence, let $\calF'$ be a set of $m'$ colours for the edges.
We can generalise the notion of $\calF$--colouring to that of {\em $(\calF,\calF')$--colouring} and we can consider {\em symbols} taking into account also the colours of the edges; therefore, we get symbols with 18 parameters.
As above, we can consider a state sum $SS_{m,m'}(\Sigma)$ and we can study its modifications under the moves of the calculus; finally, we can get an invariant $inv_{m,m'}$.

\item[2.]
{\em Colour weights}

Let $w$ be a map from $\calF$ to $\field$.
Then we can define the state sum as
$$
SS_m(\Sigma) := \sum_{\varphi\in\Phi_{\calF}(\Sigma)}
\Bigg(\prod_{C\in C(\Sigma)} w(\varphi(C))\Bigg)
\Bigg(\prod_{p\in T(\Sigma)} p^\varphi\Bigg)
.
$$
As usual, we should study its modifications under the moves of the calculus, finally getting an invariant $inv_m^w$.

\item[3.]
{\em Simplifying assumptions}

The explicit computation of the invariant $inv_m(M)$ is quite long.
One way to overcome this problem is to assume some additional identities hold. For instance, we can assume some symbols are zero or we can suppose some colourings are forbidden.

\item[4.]
{\em Radical}

The invariant $inv_m$ can be turned into a (maybe) weaker one by considering the coset (represented by the state sum) with respect to any ideal $I$ containing $I_m$.
The more natural one is the {\em radical} $\sqrt{I_m}$ of $I_m$ ({\em i.e.}~the ideal made up of all the polynomials $P\in\calR$ such that $P^n\in I_m$ for some $n\in\mathbb{N}$).

\end{itemize}

\paragraph{Computation of the invariant}
After the definition of the invariant $inv_m(M)$, the issue of computing it naturally arises.
We have not made any computation as yet, but we describe here two different techniques we plan to use to compute $inv_m(M)$.
\begin{itemize}
\item[1.]
{\em Numerical invariants}

Suppose we have an element of the zero variety associated to $I_m$.
Then we can evaluate the state sum $SS_m(\Sigma)$ at it, getting an element of $\field$ which is obviously an invariant of $M$.
Even if the computation is very easy, this technique has the drawback of finding such an element (and this is, in general, a difficult matter).

\item[2.]
{\em Gr\"obner bases}

Suppose we have a Gr\"obner basis of $\calR /I_m$.
(For an introduction to this subject, we refer the reader to~\cite{Eisenbud}, among many other possible sources.)
Then, we can find the normal form of $inv_m(M)$ and hence we can check whether two closed 3--manifolds share the same invariant or not.

\end{itemize}

\paragraph{Relationship with the Turaev--Viro invariant}
The framework we have used to define the invariant $inv_m$ is analogous to that used to define the Turaev--Viro invariant~\cite{Turaev-Viro}.
However, the two calculuses (and their proofs) used in the definition of the invariants are different.
Hence, the following question naturally arises.
\begin{question}
Are the invariant $inv_m$ and the Turaev--Viro invariant related (in some sense) to each other?
\end{question}

\subsection{Lower bounds for the Matveev complexity}

We conclude with a potential application of the invariant $inv_m$.

Throughout this section we will consider only \ptwoirred\ closed 3--man\-i\-folds.
A 3--manifold is {\em \ptwoirred} if every sphere embedded in it bounds a ball and every projective plane embedded in it (if any) is one-sided.

The Matveev complexity is defined using spines~\cite{Matveev:compl}.
However, as shown in~\cite{Matveev:book}, if the 3--manifold $M$ is \ptwoirred\ and closed, the {\em Matveev complexity} $c(M)$ can be defined
\begin{itemize}
\item zero, if $M$ is the 3--sphere $S^3$, the projective space $\RPthree$ or the lens space $L(3,1)$,
\item the minimal number of tetrahedra among all one-vertex triangulations of $M$, otherwise.
\end{itemize}

It is quite easy to find good estimates for the Matveev complexity, but an exact calculation of it is very difficult.
For instance, precise upper bounds can be easily found by exhibiting suitable triangulations, but lower bounds are usually rough.
Here ``precise'' means that the Matveev complexity is {\em a posteriori} usually very close (if even not equal) to the upper bound.

By using nullhomotopic filling Dehn spheres, a first lower bound for the Matveev complexity can be easily computed.
Let us denote by $cs(M)$ the minimal number of triple points among all nullhomotopic filling Dehn spheres of $M$ (in~\cite{Vigara:calculus} the invariant $cs(M)$ is called {\em nullhomotopic genus~0 triple point number}).
\begin{prop}\label{prop:lower_c_cs}
  Let $M$ be a \ptwoirred\ closed $3$--manifold different from $S^3$, $\RPthree$ and $L(3,1)$.
  Then the inequality
  $$
  c(M)\geqslant\frac{cs(M)}{4}
  $$
  holds.
\end{prop}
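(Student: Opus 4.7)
The plan is to combine Proposition~\ref{prop:tria_to_surf} with the characterization of Matveev complexity for \ptwoirred\ manifolds given just above the statement. The inequality is essentially a direct consequence of the construction in Proposition~\ref{prop:tria_to_surf}, which produces a nullhomotopic filling Dehn sphere with exactly $4c$ triple points from a one-vertex triangulation with $c$ tetrahedra.

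First, I would invoke the hypothesis: since $M$ is \ptwoirred\ and closed, and $M\notin\{S^3,\RPthree,L(3,1)\}$, the Matveev complexity $c(M)$ coincides with the minimal number of tetrahedra over all one-vertex triangulations of $M$ (this is the alternative definition recalled just before the proposition, from~\cite{Matveev:book}). In particular, there exists a one-vertex triangulation $\calT$ of $M$ realising this minimum, with exactly $c(M)$ tetrahedra.

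Next, I would apply Proposition~\ref{prop:tria_to_surf} to $\calT$, obtaining a nullhomotopic filling Dehn sphere $\Sigma$ of $M$ with exactly $4\,c(M)$ triple points. By the very definition of $cs(M)$ as the minimum number of triple points over all nullhomotopic filling Dehn spheres of $M$, we conclude
$$
cs(M)\leqslant 4\,c(M),
$$
which, after dividing by $4$, is the desired inequality.

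There is no real obstacle: the whole content lies in the construction of Proposition~\ref{prop:tria_to_surf} and in the equivalence between the spine-based and the triangulation-based definitions of Matveev complexity for \ptwoirred\ manifolds outside the three exceptional cases. The only thing worth emphasising in the write-up is why the three exceptional manifolds must be excluded, namely that for them the triangulation-based formula would give a strictly positive number whereas $c(M)=0$, so the equivalence of the two definitions of complexity fails precisely in those cases.
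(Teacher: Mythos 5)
Your proof is correct and follows essentially the same route as the paper: take a one-vertex triangulation of $M$ realising $c(M)$, apply Proposition~\ref{prop:tria_to_surf} to obtain a nullhomotopic filling Dehn sphere with $4c(M)$ triple points, and conclude $cs(M)\leqslant 4c(M)$. Your added remark on why the three exceptional manifolds must be excluded is accurate and a reasonable clarification, though the paper leaves it implicit.
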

\begin{proof}
  Let $\calT$ be a triangulation of $M$ with $c(M)$ tetrahedra; such a triangulation exists, because $M$ is \ptwoirred, closed, and different from $S^3$, $\RPthree$ and $L(3,1)$.
  By Proposition~\ref{prop:tria_to_surf}, we have that $M$ has a nullhomotopic filling Dehn sphere with $4c(M)$ triple points.
  Obviously, we have $cs(M)\leqslant 4c(M)$ and hence we get the thesis.
\end{proof}

In practice, this result seems to be useless in order to find a lower bound for the Matveev complexity directly.
In fact, in order to find an inequality like $c(M)\geqslant n$, we should find another inequality like $cs(M)\geqslant 4n$, and this seems to be at least as difficult as finding the former one.
We describe a potential application of the invariant $inv_m$ to overcome this problem.
If $P\in\calR$ is a polynomial, let $\deg(P)$ be its total degree.
Moreover, for any subset $\calP\subset\calR$, let $\deg(\calP)=\min\{\deg(P) : P\in\calP\}$.
\begin{teo}\label{teo:lower_cs_inv}
  Let $M$ be a \ptwoirred\ closed $3$--manifold different from $S^3$, $\RPthree$ and $L(3,1)$.
  Then the inequality
  $$
  cs(M)\geqslant\deg(inv_m(M))
  $$
  holds for all $m>1$.
\end{teo}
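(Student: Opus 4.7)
The plan is to exploit the simple observation that the state sum $SS_m(\Sigma)$ is a homogeneous polynomial whose degree equals the number of triple points of $\Sigma$. For every colouring $\varphi\in\Phi_\calF(\Sigma)$ and every $p\in T(\Sigma)$, the symbol $p^\varphi$ is (the equivalence class of) one of the generators $s_1,\ldots,s_N$ of $\calR$, hence a variable of degree $1$. Consequently $\prod_{p\in T(\Sigma)}p^\varphi$ is a monomial of total degree exactly $|T(\Sigma)|$, and $SS_m(\Sigma)$, being a sum of such monomials, satisfies $\deg(SS_m(\Sigma))\leq|T(\Sigma)|$ (with equality whenever cancellation does not kill the single graded piece).

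With this in hand, the conclusion is essentially immediate from the definitions. I would choose a nullhomotopic filling Dehn sphere $\Sigma$ of $M$ with the minimum possible number of triple points, so that $|T(\Sigma)|=cs(M)$; such a $\Sigma$ exists by Theorem~\ref{teo:spheres_present}. Since $SS_m(\Sigma)$ is itself a representative of the coset $inv_m(M)=SS_m(\Sigma)+I_m$, the definition of the degree of a subset of $\calR$ yields
$$
\deg(inv_m(M))=\min\{\deg(P):P\in SS_m(\Sigma)+I_m\}\leq\deg(SS_m(\Sigma))\leq|T(\Sigma)|=cs(M),
$$
which is exactly the claimed inequality.

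There is essentially no obstacle, and the argument does not actually require the hypothesis that $M$ be \ptwoirred\ and different from $S^3$, $\RPthree$, $L(3,1)$; those assumptions are inherited from Proposition~\ref{prop:lower_c_cs} so that the present bound can be combined with it to produce a genuine lower bound $c(M)\geq\deg(inv_m(M))/4$ for the Matveev complexity in the cases of interest. The only minor degenerate case is $SS_m(\Sigma)\in I_m$, in which $inv_m(M)$ is the zero coset and the inequality reads $cs(M)\geq 0$, still trivially valid. The slightly subtle point worth emphasising (though not an obstacle) is that $I_m$ is \emph{not} a homogeneous ideal—the generators coming from the \tone--move mix degrees $2$ and $6$, for instance—so one cannot replace the minimum in the definition by the degree of any canonical representative; nevertheless, the specific representative $SS_m(\Sigma)$ suffices to produce the one-sided bound needed here.
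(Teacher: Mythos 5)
Your argument is correct and follows the same route as the paper's: pick a nullhomotopic filling Dehn sphere $\Sigma$ realizing $cs(M)$, note that $SS_m(\Sigma)$ has degree at most $|T(\Sigma)|$, and bound $\deg(inv_m(M))$ by the degree of the representative $SS_m(\Sigma)$. Your extra remarks on possible cancellation, the non-homogeneity of $I_m$, and the degenerate case $SS_m(\Sigma)\in I_m$ are careful clarifications but do not alter the underlying argument.
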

\begin{proof}
  Let $\Sigma$ be a nullhomotopic filling Dehn sphere of $M$ with $cs(M)$ triple points.
  Let $SS_m(\Sigma)\in\calR$ be the state sum of $\Sigma$ and let $inv_m(M) = SS_m(\Sigma) + I_m \in\calR /I_m$ be the invariant coset.
  The set $inv_m(M)$ is, in particular, a subset of $\calR$, hence we can define $\deg(inv_m(M))$; obviously, we have $\deg(inv_m(M))\leqslant\deg(SS_m(\Sigma))$.
  Since $\deg(SS_m(\Sigma))$ is just the number $cs(M)$ of triple points of $\Sigma$, we have $\deg(inv_m(M))\leqslant cs(M)$.
\end{proof}

An obvious application of Proposition~\ref{prop:lower_c_cs} and Theorem~\ref{teo:lower_cs_inv} yields the desired lower bounds.
\begin{cor}\label{cor:bounds}
  Let $M$ be a \ptwoirred\ closed $3$--manifold different from $S^3$, $\RPthree$ and $L(3,1)$.
  Then the inequality
  $$
  c(M)\geqslant\frac{\deg(inv_m(M))}{4}
  $$
  holds for all $m>1$.
\end{cor}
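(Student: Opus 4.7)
The plan is simply to chain the two previously established inequalities. The hypotheses on $M$ (that it is \ptwoirred, closed, and not among $S^3$, $\RPthree$, $L(3,1)$) are exactly those needed for both Proposition~\ref{prop:lower_c_cs} and Theorem~\ref{teo:lower_cs_inv}, so both results apply directly to our $M$.

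First, I would invoke Theorem~\ref{teo:lower_cs_inv} to obtain
$$cs(M)\geqslant\deg(inv_m(M))$$
for every $m>1$. Next, I would apply Proposition~\ref{prop:lower_c_cs}, which gives
$$c(M)\geqslant\frac{cs(M)}{4}.$$
Since division by the positive constant $4$ preserves the direction of inequalities, substituting the first bound into the second yields the desired
$$c(M)\geqslant\frac{cs(M)}{4}\geqslant\frac{\deg(inv_m(M))}{4}.$$

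There is essentially no obstacle here: the corollary is a purely formal consequence of the two results stated immediately before it, and the hypotheses on $M$ are shared between them, so nothing extra needs to be verified. The only mild subtlety worth flagging in the writeup is that $\deg(inv_m(M))$ is well-defined by the convention set just before Theorem~\ref{teo:lower_cs_inv} (taking the minimum total degree over representatives in the coset), and that this minimum is indeed realised or at least bounded above by $\deg(SS_m(\Sigma))$ for any nullhomotopic filling Dehn sphere $\Sigma$ of $M$; but this has already been handled inside the proof of Theorem~\ref{teo:lower_cs_inv}, so no further discussion is needed here.
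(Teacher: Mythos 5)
Your proof is correct and matches the paper's approach exactly: the paper also obtains the corollary by chaining Proposition~\ref{prop:lower_c_cs} with Theorem~\ref{teo:lower_cs_inv}, observing that the hypotheses on $M$ are shared between the two. No further discussion is needed.
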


It is worth noting that such a framework can be equally applied to the Turaev--Viro setting, leading to non-sharp lower bounds on complexity, as shown by King~\cite{King}.
Hence, the following question naturally arises.
\begin{question}
Are the lower bounds of Corollary~\ref{cor:bounds} sharp, at least for some closed $3$--manifolds?
\end{question}

\subsection*{Acknowledgements}

I would like to thank Prof.~Sergei Matveev and Simon King for the useful
discussions I have had in the beautiful period I have spent at the Department
of Mathematics in Darmstadt.
I would also like to thank the Galileo Galilei Doctoral School of Pisa and the DAAD
(Deutscher Akademischer Austausch Dienst) for giving me the opportunity to
stay in Darmstadt, and Prof.~Alexander Martin for his willingness.

This paper is dedicated to Paolo.

\begin{small}

\end{small}

\end{document}